\renewcommand{\epsilon}{\varepsilon} 
\renewcommand{\phi}{\varphi} 
\def\geq{\geqslant}
\def\leq{\leqslant}
\def\le{\leqslant}
\def\kratno{\lower.5ex\hbox{$\,\vdots\,$}}
\def\q#1.{\smallbreak\noindent\hskip15pt{\bf#1.}\enspace\ignorespaces} 
\def\dotline{\smallskip\hbox to \hsize{\dotfill}\medskip}
\def\norm[#1]{\| #1 \|}
\newcommand{\spec}{\mathrm{Spec}}
\newcommand{\cov}{\mathrm{cov}}
\newcommand{\Vol}{\mathrm{Vol}}
\newcommand{\N}{\mathbb{N}}
\newcommand{\R}{\mathbb{R}}
\newcommand{\conv}{\mathop{\mathrm{conv}}\nolimits}
\theoremstyle{plain}
\newtheorem{thm}{Theorem} 
\newtheorem{lm}{Lemma}
\newtheorem{st}{Statement}
\theoremstyle{definition}
\newtheorem{cor}{Corollary}
\theoremstyle{remark}
\newtheorem{rem}{Remark}
\title{Mixed volume of infinite-dimensional convex compact sets}
\keywords{Mixed volumes, intrinsic volumes, Sudakov's theorem, Tsirelson's theorem, $GB$-set, isonormal process, natural modification, Wiener spiral.}
\subjclass[2020]{Primary: 52A39, 60D05; Secondary: 60G15, 52A22.
}
\author[M. Dospolova]{Mariia Dospolova}
\address{Mariia Dospolova, St. Petersburg Department of Steklov Institute of~Mathematics, Russia} 
\email{dospolova.maria@yandex.ru}
\begin{document}

 \thanks{The work was supported by Ministry of Science and Higher Education of the Russian Federation, agreement № 075-15-2022-289.}

	
 	\begin{abstract}
Let $K$ be a convex compact $GB$-subset of a separable Hilbert space~$H$. Denote by $\spec_k K$ the set $\{(\xi_1(h), \ldots, \xi_k(h))\colon h\in K\}\subset \R^k,$ where $\xi_1, \ldots, \xi_k$ are independent copies of the isonormal Gaussian process on $H$. Tsirelson showed that in this case the intrinsic volumes of $K$ satisfy the relation
\begin{equation*}
V_k(K)= \frac{(2\pi)^{k/2}}{k!\kappa_k} \mathbf{E}\,\Vol_k(\spec_k K).
\end{equation*}
Here, $\mathbf{E} \ \Vol_k(\spec_k K)$ is the mean volume of $\spec_k K$ and $\kappa_k$ is the volume of the $k$-dimensional unit ball.

In this work, we generalize Tsirelson's theorem to the \textit{mixed volumes} of the infinite-dimensional convex compact $GB$-subsets of $H$, first introducing this notion.

Moreover, using the obtained result we compute the mixed volume of the closed convex hulls of the two orthogonal Wiener spirals.

 	\end{abstract}
 	\maketitle
 	\newpage
	
	\tableofcontents
	
	
	\newpage
	
	
	 
\section{Introduction}
\subsection{Intrinsic volumes}
Let $K \subset \R^d$ be a non-empty convex compact set and $\dim K$ be the dimension of $K$ (that is, the dimension of the smallest affine subspace containing $K$). One of the most important geometric characteristics of $K$ 
are its \textit{intrinsic volumes} $V_0(K), \ldots, V_d(K)$, which are defined as the coefficients in the Steiner formula (see, e.g., \cite[relation 14.5]{SW08})
	\begin{equation}\label{eq:steiner}
		\Vol_d(K+\lambda B^d) =\sum_{k=0}^d \kappa_{d-k} V_k(K) \lambda^{d-k}, \quad \lambda \geq 0,
	\end{equation}
	where 
	$\Vol_d (\cdot)$ denotes the volume ($d$-dimensional Lebesgue measure),
$B^k$ is the $k$-dimensional unit ball and $\kappa_k:=\Vol_k(B^k)=\pi^{k/2}/\Gamma(\frac k 2 +1)$ is the volume of $B^k$. 
In other words, the volume of the neighborhood 
is represented by a polynomial whose coefficients depend on the set $K$. 
	
	The intrinsic volumes play an important role in convex geometry (see, e.g., \cite{schneider2014convex}).
In particular, it can be shown \cite[Section 6.2]{SW08} that 
	$V_d(\cdot)$ is the $d$-dimensional volume, $V_{d-1}(\cdot)$ is half the surface area for $d$-dimensional convex compact sets, $V_1(\cdot)$ is the mean width, up to a constant factor, and $V_0(\cdot) \equiv 1$.
	
	Moreover, the normalization in  \eqref{eq:steiner} is chosen so that the intrinsic volumes of the set do not depend on the dimension of the ambient space. 
	This means that if we embed $K$ into $\R^N$ with $N\geq d$, the intrinsic volumes will be the same. This observation allowed Sudakov \cite{sudakov1979geometric} and Chevet \cite{chevet1976processus} to generalize the concept of intrinsic volume to the case of infinite-dimensional $K$ as follows.
	
	Let $H$ be an infinite-dimensional separable Hilbert space. Then for an arbitrary non-empty convex set $K \subset H$ we define ${V}_k(K), k=0, 1, \ldots$ by the formula
	\begin{align} \label{intrinf}
    {V}_k(K)  =   \sup_{K' \subset K
    } {V}_k(K') \in [0, \infty],
\end{align}
where the supremum is taken over all finite-dimensional convex compact subsets $K'$ of $K$.	

In the next subsection, we formulate the results demonstrating a deep connection between the intrinsic volumes of some convex compact sets and Gaussian processes.

\subsection{Sudakov’s and Tsirelson’s theorems}
A mean-zero Gaussian random process $(\xi(h))_{h \in H}$ over a separable Hilbert space $H$ is called \textit{isonormal} if its covariance function has the form
\begin{align*}
     \cov (\xi(h), \xi(g)) = \langle h, g\rangle,
\end{align*}
where $\langle \text{ },\text{ } \rangle $  denotes the inner product on $H$.

In his paper \cite[Proposition 14]{sudakov1979geometric} Sudakov discovered a connection between the first intrinsic volume and the expectation of the supremum of an isonormal process.
\begin{thm}[Sudakov]\label{theo:sudakov}
For a convex compact set $K\subset H$
\begin{equation}\label{2041}
V_1(K)= \sqrt{2\pi}\,\mathbf{E}\,\sup_{h\in K} \xi(h).
\end{equation}

\end{thm}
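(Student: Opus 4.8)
The plan is to reduce the identity to the finite-dimensional situation, where it becomes an explicit computation with the support function, and then to recover the general case through the defining approximation \eqref{intrinf}. Both sides of \eqref{2041} are monotone with respect to inclusion: the left-hand side by definition \eqref{intrinf}, and the right-hand side because $K_1\subset K_2$ forces $\sup_{h\in K_1}\xi(h)\le\sup_{h\in K_2}\xi(h)$ pointwise, hence in expectation. Consequently it suffices to prove \eqref{2041} for finite-dimensional convex compact $K$ and then to take the supremum over finite-dimensional $K'\subset K$.

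Suppose first $\dim K=d$ and identify the affine hull of $K$ with $\R^d\subset H$. On a $d$-dimensional subspace the isonormal process is realized as $\xi(h)=\langle h,g\rangle$, where $g$ is a standard Gaussian vector in $\R^d$; this is the centred Gaussian process with the correct covariance, since $\mathbf{E}\,\langle h,g\rangle\langle h',g\rangle=\langle h,h'\rangle$. Therefore
\begin{equation*}
\mathbf{E}\,\sup_{h\in K}\xi(h)=\mathbf{E}\,\sup_{h\in K}\langle h,g\rangle=\mathbf{E}\,h_K(g),
\end{equation*}
where $h_K(u)=\sup_{h\in K}\langle h,u\rangle$ is the support function of $K$. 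Writing $g=|g|\theta$ in polar coordinates, with $\theta$ uniform on $S^{d-1}$ and independent of $|g|$, and using the positive homogeneity $h_K(|g|\theta)=|g|\,h_K(\theta)$, I obtain
\begin{equation*}
\mathbf{E}\,h_K(g)=\mathbf{E}|g|\cdot\frac{1}{d\kappa_d}\int_{S^{d-1}}h_K(u)\,\sigma(du),
\end{equation*}
with $\sigma$ the surface measure on $S^{d-1}$ of total mass $d\kappa_d$.

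The remaining step is to identify the spherical average of $h_K$ with $V_1(K)$. The mean-width representation of the first intrinsic volume gives $V_1(K)=\kappa_{d-1}^{-1}\int_{S^{d-1}}h_K(u)\,\sigma(du)$, so that
\begin{equation*}
\mathbf{E}\,\sup_{h\in K}\xi(h)=\frac{\mathbf{E}|g|\,\kappa_{d-1}}{d\kappa_d}\,V_1(K).
\end{equation*}
Inserting $\mathbf{E}|g|=\sqrt2\,\Gamma(\tfrac{d+1}2)/\Gamma(\tfrac d2)$ together with $\kappa_d=\pi^{d/2}/\Gamma(\tfrac d2+1)$ and $\kappa_{d-1}=\pi^{(d-1)/2}/\Gamma(\tfrac{d+1}2)$, all the Gamma factors and powers of $\pi$ collapse and the prefactor equals $(2\pi)^{-1/2}$ \emph{independently of} $d$. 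This dimension-independence is exactly what makes the statement coherent, and it establishes \eqref{2041} for finite-dimensional $K$.

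Finally I pass to the infinite-dimensional case. Fix a countable dense subset $\{h_1,h_2,\dots\}$ of $K$ and set $K_n=\conv\{h_1,\dots,h_n\}$, an increasing sequence of finite-dimensional convex compact subsets with $\cl\bigcup_n K_n=K$. Defining the suprema through this countable set makes $\sup_{h\in K}\xi(h)$ measurable, and $\sup_{h\in K_n}\xi(h)\uparrow\sup_{h\in K}\xi(h)$ along every sample path; monotone convergence then gives $\mathbf{E}\,\sup_{h\in K_n}\xi(h)\to\mathbf{E}\,\sup_{h\in K}\xi(h)$. Since $V_1(K_n)\to V_1(K)$ by \eqref{intrinf}, applying the finite-dimensional identity to each $K_n$ and letting $n\to\infty$ yields \eqref{2041} in general, both sides being allowed to equal $+\infty$. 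I expect the main obstacle to be precisely this last passage: one must secure measurability of the supremum and justify the exchange of limit and expectation without assuming continuity of the sample paths, which I handle by reducing everything to the fixed countable dense set and invoking monotone convergence, noting separately that when $V_1(K)=\infty$ the sets $K_n$ force the right-hand side to diverge as well.
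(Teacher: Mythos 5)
The paper contains no proof of this theorem: it is quoted from Sudakov's work (Proposition 14 of \cite{sudakov1979geometric}) and is later used as an ingredient, e.g.\ in the proof of Statement \ref{st2}. So your argument can only be compared with the paper's general strategy, and in fact it mirrors it: prove a finite-dimensional identity, then pass to the limit along an exhausting sequence of finite-dimensional convex compact subsets — the same pattern as the paper's proof of Theorem \ref{mainmix}. Your finite-dimensional computation is correct: realizing $\xi$ as $\langle h,g\rangle$, splitting $g$ into radial and angular parts, and combining $V_1(K)=\kappa_{d-1}^{-1}\int_{S^{d-1}}h_K\,d\sigma$ with $\mathbf{E}|g|=\sqrt{2}\,\Gamma(\tfrac{d+1}{2})/\Gamma(\tfrac d2)$ does yield the dimension-free constant $(2\pi)^{-1/2}$. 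One small repair there: the realization $\xi(h)=\langle h,g\rangle$ is valid on \emph{linear}, not affine, subspaces, so you should first translate $K$ so that its affine hull passes through the origin; this is harmless because both sides of \eqref{2041} are translation invariant (the right-hand side by linearity of the isonormal process and $\mathbf{E}\,\xi(v)=0$). Your device of defining all suprema through a fixed countable dense set is exactly the separability caveat the paper flags in its remark after Theorem \ref{theo:tsirelson_spectrum}.

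The one step that is not justified as written is ``$V_1(K_n)\to V_1(K)$ by \eqref{intrinf}''. Definition \eqref{intrinf} only gives $\lim_n V_1(K_n)\le V_1(K)$: a finite-dimensional convex compact $K'\subset K$ need not be contained in any $K_n$, only Hausdorff-approximated by subsets of the $K_n$ that live in higher and higher dimensions, and since $V_1(B^N)=N\kappa_N/\kappa_{N-1}\to\infty$ as $N\to\infty$, naive Hausdorff-approximation bounds are not dimension-uniform. A genuine proof of that convergence needs an extra idea (for instance, project $K_n$ onto the affine hull of $K'$, use that orthogonal projections do not increase $V_1$, and use Minkowski additivity of $V_1$ in that fixed finite-dimensional space). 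Fortunately your proof does not need this claim at all: the monotonicity you established in your first paragraph already closes the argument. For every finite-dimensional convex compact $K'\subset K$ the finite-dimensional identity and monotonicity give $V_1(K')=\sqrt{2\pi}\,\mathbf{E}\sup_{h\in K'}\xi(h)\le\sqrt{2\pi}\,\mathbf{E}\sup_{h\in K}\xi(h)$, so taking the supremum over $K'$ in \eqref{intrinf} yields $V_1(K)\le\sqrt{2\pi}\,\mathbf{E}\sup_{h\in K}\xi(h)$. Conversely, monotone convergence along your $K_n$ (applicable because all these variables are bounded below by the integrable $\xi(h_1)$) gives
\begin{equation*}
\sqrt{2\pi}\,\mathbf{E}\sup_{h\in K}\xi(h)=\lim_{n\to\infty}\sqrt{2\pi}\,\mathbf{E}\sup_{h\in K_n}\xi(h)=\lim_{n\to\infty}V_1(K_n)\le V_1(K).
\end{equation*}
This sandwich proves \eqref{2041}, remains valid when both sides are $+\infty$, and establishes $V_1(K_n)\to V_1(K)$ as a corollary rather than using it as an ingredient.
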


Later Tsirelson~\cite[Theorem 6]{Ts1985} generalized  Theorem \ref{theo:sudakov}.
Let $\{\xi_i(h)\colon h\in H\}$, $1\leq i \leq k$, denote $k$ independent copies of the isonormal process. Then $k$-\textit{dimensional spectrum} of a convex compact set $K \subset H$ is defined as the following random set:
$$
\spec_k K := \{(\xi_1(h), \ldots, \xi_k(h))\colon h\in K\}\subset \R^k.
$$

To formulate Tsirelson's result, we first need the notion of a $GB$-set.
A subset $K$ of a separable Hilbert space $H$ is said to be a $GB$-\textit{set} if there exists a modification of the isonormal process with 
index set $K$, which has almost surely
bounded realizations (see Section \ref{history} for detailed definitions and properties). It is known 
\cite[Theorem 1]{sudakov1979geometric} that the property of a convex $K$ to be a $GB$-set is equivalent to $V_1(K)<\infty$. In the latter case $V_k(K)<\infty$ for all $k=0, 1, \ldots$ (see, e.g., \cite{chevet1976processus}). 

\begin{thm}[Tsirelson]\label{theo:tsirelson_spectrum}
For all convex compact $GB$-sets $K\subset H$ and all $k=0,1, \ldots,$
\begin{equation}\label{2042}
V_k(K)= \frac{(2\pi)^{k/2}}{k!\kappa_k} \mathbf{E}\,\Vol_k(\spec_k K).
\end{equation}
\end{thm}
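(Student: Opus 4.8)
The plan is to prove the identity first for finite-dimensional $K$ by an explicit computation, and then to reach the infinite-dimensional case by a monotone approximation argument, exploiting the fact that both sides of \eqref{2042} are independent of the ambient dimension.

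\emph{Finite-dimensional reduction.} Suppose first that $K$ lies in a finite-dimensional subspace $L\subset H$, which I identify with $\R^n$ via an orthonormal basis $e_1,\dots,e_n$ (take $n\ge k$; the case $k>\dim K$ is trivial, both sides vanishing). On $L$ the isonormal process admits a linear version, and since $\cov(\xi_i(e_p),\xi_j(e_q))=\delta_{ij}\delta_{pq}$, the numbers $a_{ip}:=\xi_i(e_p)$ form a $k\times n$ matrix $A$ with i.i.d.\ standard Gaussian entries. Writing $h\in K$ in coordinates gives $(\xi_1(h),\dots,\xi_k(h))=Ah$, so that $\spec_k K=A(K)$ and the problem reduces to computing $\mathbf{E}\,\Vol_k(A(K))$ for a Gaussian $k\times n$ matrix $A$.

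\emph{The computation.} First I would factor the random map: applying Gram--Schmidt to the columns of $A^{T}$ gives $A^{T}=QR$ with $Q$ an $n\times k$ matrix with orthonormal columns and $R$ upper triangular with positive diagonal, whence $A(K)=R^{T}Q^{T}(K)$ and
\begin{equation*}
\Vol_k(A(K))=|\det R|\cdot\Vol_k(\Pi_E K),
\end{equation*}
where $E=\mathrm{col}(Q)$ and $\Pi_E K$ is the orthogonal projection of $K$ onto $E$. By rotational invariance $E$ is Haar-distributed on the Grassmannian $G(n,k)$, and by the Bartlett decomposition it is independent of $R$, whose diagonal entries satisfy $R_{jj}^2\sim\chi^2_{n-j+1}$. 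Hence the expectation factorizes, the determinant factor telescoping to
\begin{equation*}
\mathbf{E}|\det R|=\prod_{j=1}^{k}\mathbf{E}\,\chi_{n-j+1}=2^{k/2}\,\frac{\Gamma(\tfrac{n+1}{2})}{\Gamma(\tfrac{n-k+1}{2})},
\end{equation*}
while the remaining integral is evaluated by the classical Cauchy--Kubota formula, giving $\int_{G(n,k)}\Vol_k(\Pi_E K)\,\nu(dE)=\kappa_k\kappa_{n-k}\big(\binom{n}{k}\kappa_n\big)^{-1}V_k(K)$. Multiplying the two factors and simplifying the $\Gamma$- and $\kappa$-terms with the Legendre duplication formula, all dependence on $n$ cancels and one arrives precisely at $\mathbf{E}\,\Vol_k(\spec_k K)=\tfrac{k!\kappa_k}{(2\pi)^{k/2}}V_k(K)$, which is \eqref{2042}.

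\emph{Passage to infinite dimensions and the main obstacle.} Choose a nested sequence of finite-dimensional convex compact sets $K_1\subset K_2\subset\cdots\subset K$ with dense union. The finite-dimensional identity applies to each $K_m$; since the $\spec_k K_m$ are nested convex bodies and the natural modification realizes $\spec_k K$ as an a.s.\ bounded random convex set with $\overline{\bigcup_m\spec_k K_m}=\spec_k K$, one gets $\Vol_k(\spec_k K_m)\uparrow\Vol_k(\spec_k K)$ almost surely, so monotone convergence yields $\mathbf{E}\,\Vol_k(\spec_k K_m)\to\mathbf{E}\,\Vol_k(\spec_k K)$; a Hausdorff-approximation argument gives $V_k(K_m)\to V_k(K)=\sup_{K'\subset K}V_k(K')$, and combining the limits proves the theorem. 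The genuinely delicate part is not the finite-dimensional computation (essentially Kubota's formula plus a $\Gamma$-identity) but this limit: one must verify, using the $GB$-property and the natural modification, that $\spec_k K$ is an a.s.\ bounded random convex set with measurable and integrable volume, that the closed increasing union equals $\spec_k K$ almost surely, and that the interchange of limit and expectation is legitimate. Finiteness of $V_1(K)$ (equivalently, the $GB$-property) is exactly what underwrites the integrability of $\Vol_k(\spec_k K)$ and all of these steps.
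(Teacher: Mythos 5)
Two preliminary facts shape this review: the paper itself does \emph{not} prove Theorem \ref{theo:tsirelson_spectrum} (it is quoted from Tsirelson \cite{Ts1985}), and the paper's closest analogue, the proof of Theorem \ref{mainmix}, has the same two-stage shape as your proposal but its finite-dimensional stage (Statement \ref{stat}) \emph{uses} \eqref{2042} as an input via a polarization argument rather than proving it. So your finite-dimensional computation is genuinely different content, and it is correct: writing $\spec_k K=AK$ for a $k\times n$ Gaussian matrix (the paper's Remark \ref{finited}), the factorization $\Vol_k(AK)=|\det R|\,\Vol_k(\Pi_E K)$, the Bartlett facts ($E$ Haar-distributed on $G(n,k)$ and independent of $R$, $R_{jj}^2\sim\chi^2_{n-j+1}$, so $\mathbf{E}|\det R|$ telescopes to $2^{k/2}\Gamma(\tfrac{n+1}{2})/\Gamma(\tfrac{n-k+1}{2})$), the Cauchy--Kubota formula, and the Legendre duplication step all check out: the $n$-dependence cancels and one gets $\mathbf{E}\,\Vol_k(AK)=\tfrac{k!\kappa_k}{(2\pi)^{k/2}}V_k(K)$. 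This gives a self-contained proof of the finite-dimensional case, which neither the paper nor its Statement \ref{stat} provides.

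The gap is in the passage to infinite dimensions, and it is exactly the step you flag but do not carry out. Everything there rests on the single claim that almost surely $\overline{\bigcup_m\spec_k K_m}=\spec_k K$, i.e.\ that $\bigcup_m\spec_k K_m$ is a.s.\ dense in $\spec_k K$ \emph{with one exceptional null set serving all of $K$}. A naive argument genuinely fails here: for fixed $\theta\in K$ and $s_n\to\theta$, the $L^2$-isometry \eqref{isometr} gives $\langle s_n,x\rangle\to\langle\theta,x\rangle$ a.s.\ only along a subsequence and with a null set depending on $\theta$, and these null sets cannot be unioned over the uncountably many $\theta\in K$. This is precisely what the paper's machinery exists for: Statement \ref{st2} shows the $GB$-property yields a natural modification, and Tsirelson's Lemma \ref{lemmaT} upgrades the $\theta$-dependent null sets to a single common one, which is the content of Corollary \ref{spec}. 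Without this (or a reproof of it), the a.s.\ monotone convergence $\Vol_k(\spec_k K_m)\uparrow\Vol_k(\spec_k K)$, and even the measurability and a.s.\ finiteness of $\Vol_k(\spec_k K)$, are unsupported assertions. So: right strategy, complete and correct finite-dimensional core, but to be a proof the limit step must import Lemma \ref{lemmaT} and Corollary \ref{spec} (together with Statement \ref{st2} for the existence of the natural modification) rather than merely point at them.
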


\begin{rem}\label{finited}
In the case when $K \subset \R^d$, $k \leq d$, the last formula can be rewritten as
\begin{equation*}
V_k(K)= \frac{(2\pi)^{k/2}}{k!\kappa_k} \mathbf{E}\,\Vol_k(AK),
\end{equation*}
where $A$ is a standard Gaussian matrix of size $k\times d$ (whose entries are independent standard normal random variables), $\spec_k K = AK:=\{Ax:x\in K\}\subset \R^k$.
\end{rem}

\begin{rem}
Strictly speaking,
in Theorem \ref{theo:sudakov} we need the existence of a \textit{separable} modification of the process $\xi$, and in Theorem \ref{theo:tsirelson_spectrum} we need the existence of a 
so-called \textit{natural} modification of $\xi_i$ (see Subsections \ref{snmod}, \ref{gbset} for details). We will show that under the assumptions of Theorems  \ref{theo:sudakov}, \ref{theo:tsirelson_spectrum} the corresponding modifications do exist (see Statement \ref{st2}).
\end{rem}
The main goal of this paper is to obtain a generalization of Theorem \ref{theo:tsirelson_spectrum} to the  \textit{mixed volumes} defined in the next subsection.

\subsection{Mixed volumes}
Minkowski proved \cite{minkowski1911theorie} 
that for arbitrary non-empty convex compact sets
$K_1, \ldots, K_s \subset \R^d$ \ the functional $\Vol_d(\lambda_1K_1+\ldots+\lambda_sK_s)$ for ${\lambda_1, \ldots, \lambda_s\geq 0}$ is a homogeneous polynomial of degree $d$ with the non-negative coefficients:
\begin{align}\label{mixed1}
    	\Vol_d(\lambda_1K_1+\ldots+\lambda_sK_s)=\sum_{i_1=1}^s \cdots \sum_{i_d=1}^s \lambda_{i_1} \ldots \lambda_{i_d}  \Tilde{V}_d(K_{i_1}, \ldots, K_{i_d}).
\end{align}

The coefficients $\Tilde{V}_d(K_{i_1}, \ldots, K_{i_d})$ are uniquely determined if we assume that they are symmetric with respect to the permutations of $K_{i_1}, \ldots, K_{i_d}$. The coefficient $\Tilde{V}_d(K_{i_1}, \ldots, K_{i_d})$ is called the \textit{mixed volume} of $K_{i_1}, \ldots, K_{i_d}$. 

It is easy to understand (see, e.g., \cite[Section 5.1]{schneider2014convex}) that intrinsic volumes are special cases of the mixed volumes, namely, 

\begin{align}\label{mixed}
    	V_k(K)= \frac{{d \choose k}}{\kappa_{d-k}}\Tilde{V}_d(\underbrace{K,\dots,K}_{k\;\text{times}},B^d,\dots,B^d) .
\end{align}

The theory of mixed volumes finds wide application in convex and algebraic geometry \cite[Chapter 4]{burago2013geometric}, inequalities \cite{schneider2014convex} and 
the theory of Gaussian distributions \cite{zaporozhets2014random}.
Some of the properties of the mixed volumes are given in  Subsection \ref{mixprop}.

Next, we formulate the main results of this work.



 \section{Main results}
  \subsection{Generalization of Tsirelson's theorem}
In order to generalize Theorem \ref{theo:tsirelson_spectrum} to the case of mixed volumes, we first define an isonormal Gaussian random process according to Tsirelson \cite{Ts1982}.

Consider 
a linear topological space with  mean-zero Gaussian measure $(E, \gamma)$ and its kernel $E_0 \subset E$ (see Section \ref{history} for definitions and properties). Since the kernel is a Hilbert space, we have the inner product on $E_0$, which we will denote by $ \langle \text{ },\text{ } \rangle_{E_0} $ (it is uniquely determined by the measure $\gamma$). For each $\theta \in E_0$ the linear functional $ \langle \theta,\eta \rangle_{E_0} $  is continuous in $\eta \in E_0$ and has a unique (up to coincidence almost everywhere) extension to a linear functional, measurable in $x \in E$ (see \cite[Section 9, Lemma 2]{ML1995} or \cite[Corollary 2.10.8]{bogachev1998gaussian}), which we
denote by $\langle \theta,x \rangle $. Moreover,
\begin{align} \label{isometr}
\int_{E} \langle \theta,x \rangle^2 \gamma(dx) = \norm[\theta]^2 =  \langle \theta,\theta \rangle.
\end{align}

Thus, for any set $K \subset E_0$, the isonormal Gaussian random process $\langle \theta,x \rangle $ is defined, where $\theta \in K$, $x$ ranges over the space $E$ equipped with the Gaussian measure $\gamma$.

To state and prove the main result, we will use the kernel $E_0$ as $H$ and the process $\langle \theta, \cdot \rangle $ as isonormal process.

  Let us rewrite Theorem \ref{theo:tsirelson_spectrum} according to the notation of this subsection for further convenience. Formula \eqref{2042} for $k = 0, 1, \ldots$ turns to
\begin{align*}
        V_k(K)=  \frac{(2\pi)^{k/2}}{k!\kappa_k} \int_{E} \int_{E}\ldots \int_{E} \Vol_k(\spec(x_1, \ldots, x_k | K)) \gamma(dx_1) \ldots \gamma(dx_k).
    \end{align*}
Here $K \subset E_0$ is a convex compact $GB$-set, \\ ${ \spec(x_1, \ldots, x_k | K) := \{ \left( \langle \theta,x_1 \rangle, \ldots, \langle \theta,x_k \rangle \right) : \theta \in K \} \subset \R^k}$ is the joint spectrum for $x_1, \ldots, x_k \in E$ on $K$.

Now we introduce the concept of mixed volume for infinite-dimensional convex sets 
similar to \eqref{intrinf}.

Let $K_1, \ldots, K_k \subset H$ be non-empty convex 
subsets of an infinite-dimensional separable Hilbert space $H$. 
Then the \textit{mixed volume} $\Tilde{V}(K_{1}, \ldots, K_{k})$ of the sets $K_1, \ldots, K_k$ is defined as
\begin{align}\label{commonmix}
  \Tilde{V}(K_{1}, \ldots, K_{k})  =   \sup_{K'_i \subset K_i}
  \frac{{d \choose k}}{\kappa_{d-k}} \Tilde{V}_d(K'_{1}, \ldots, K'_{k}, \underbrace{B^d,\dots,B^d}_{d-k\;\text{times}}),
\end{align}
where the supremum is taken over all $d \geq k$ and all finite-dimensional convex compact subsets $K'_i \subset K_i, \dim K'_i \leq d, 
  i = 1, \ldots, k$.
\begin{rem}\label{norm}
The normalization in \eqref{commonmix} is chosen so that for  $K_i: \dim K_i \leq d$, the right-hand
side  
of \eqref{commonmix} does not depend on $d$, as well as in expression 
\eqref{mixed}. Therefore, $ \Tilde{V}(K_{1}, \ldots, K_{k})$ is well defined.

\end{rem}
The proof of Remark \ref{norm} can be found in Subsection \ref{normproof}. 


Now we are ready to formulate the main result of this paper. 

\begin{thm}\label{mainmix} Fix $k \in \N$.
For convex compact $GB$-sets $K_i \subset E_0, \ i = 1, \ldots, k$  we have 
\begin{gather*} 
    \Tilde{V}(K_{1}, \ldots, K_{k}) =
    \frac{(2\pi)^{k/2}}{k!\kappa_k}  \ \mathbf{E} \ \Tilde{V}_k(\spec_k K_1, \ldots, \spec_kK_k)
\\
=\frac{(2\pi)^{k/2}}{k!\kappa_k} \int_{E} \ldots \int_{E} \Tilde{V}_k(\spec(x_1, \ldots, x_k | K_1), \ldots, \spec(x_1, \ldots, x_k | K_k)) \gamma(dx_1) \ldots \gamma(dx_k).
\end{gather*}
\end{thm}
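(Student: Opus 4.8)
The plan is to reduce the mixed-volume identity to the already-known single-set identity (Theorem~\ref{theo:tsirelson_spectrum}) by \emph{polarization}, and then to pass from finite-dimensional approximants to the infinite-dimensional sets by a monotone approximation argument. Two structural facts drive the reduction. First, for each fixed realization $x=(x_1,\ldots,x_k)\in E^k$ the spectrum map $\theta\mapsto(\langle\theta,x_1\rangle,\ldots,\langle\theta,x_k\rangle)$ is linear, so that for $\lambda_1,\ldots,\lambda_k\geq 0$ one has $\spec(x\mid\lambda_1K_1+\cdots+\lambda_kK_k)=\lambda_1\spec(x\mid K_1)+\cdots+\lambda_k\spec(x\mid K_k)$. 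Second, by Minkowski's expansion \eqref{mixed1} the mixed volume $\Tilde{V}_k$ is the symmetric multilinear form obtained by polarizing $\Vol_k$, and the same holds for $\Tilde{V}_d$ in each of its first $k$ slots.

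I would first treat the finite-dimensional case, say $K_1,\ldots,K_k\subset\R^d\subset E_0$. Apply the single-set formula in the form of Remark~\ref{finited} to the Minkowski combination $L=\lambda_1K_1+\cdots+\lambda_kK_k$, for arbitrary $\lambda_i\geq 0$, so that $V_k(L)=\frac{(2\pi)^{k/2}}{k!\kappa_k}\,\mathbf{E}\,\Vol_k(AL)$ with $AL=\lambda_1AK_1+\cdots+\lambda_kAK_k$. Both sides are polynomials in $(\lambda_1,\ldots,\lambda_k)$ on the orthant $[0,\infty)^k$. On the left, inserting \eqref{mixed} and expanding $\Tilde{V}_d(L,\ldots,L,B^d,\ldots,B^d)$ by multilinearity, the coefficient of $\lambda_1\cdots\lambda_k$ equals $\frac{{d\choose k}}{\kappa_{d-k}}\,k!\,\Tilde{V}_d(K_1,\ldots,K_k,B^d,\ldots,B^d)$. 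On the right, expanding $\Vol_k(\lambda_1AK_1+\cdots+\lambda_kAK_k)$ by \eqref{mixed1} for each fixed $A$ and then taking expectations (finite since $A$ has Gaussian entries with all moments), the coefficient of $\lambda_1\cdots\lambda_k$ equals $\frac{(2\pi)^{k/2}}{k!\kappa_k}\,k!\,\mathbf{E}\,\Tilde{V}_k(AK_1,\ldots,AK_k)$. Matching coefficients (two polynomials agreeing on an open subset of the orthant agree identically) gives
\[
\frac{{d\choose k}}{\kappa_{d-k}}\,\Tilde{V}_d(K_1,\ldots,K_k,B^d,\ldots,B^d)=\frac{(2\pi)^{k/2}}{k!\kappa_k}\,\mathbf{E}\,\Tilde{V}_k(\spec_kK_1,\ldots,\spec_kK_k),
\]
which is exactly the desired identity in finite dimensions; together with Remark~\ref{norm} it shows that each term inside the supremum in \eqref{commonmix} coincides with the right-hand side evaluated on $K'_1,\ldots,K'_k$.

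It remains to pass to the supremum in \eqref{commonmix}. By the finite-dimensional identity, $\Tilde{V}(K_1,\ldots,K_k)=\frac{(2\pi)^{k/2}}{k!\kappa_k}\,\sup_{K'_i\subset K_i}\mathbf{E}\,\Tilde{V}_k(\spec_kK'_1,\ldots,\spec_kK'_k)$. Since $K'_i\subset K_i$ forces $\spec(x\mid K'_i)\subset\spec(x\mid K_i)$ and $\Tilde{V}_k$ is monotone in each argument, every term is bounded by $\mathbf{E}\,\Tilde{V}_k(\spec_kK_1,\ldots,\spec_kK_k)$, giving the inequality $\leq$. For the reverse inequality I would fix increasing finite-dimensional sets $K_i^{(n)}\subset K_i^{(n+1)}\subset K_i$ whose union is dense in $K_i$. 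Invoking the natural modification guaranteed by the $GB$ property, for almost every $x$ the functionals $\theta\mapsto\langle\theta,x_j\rangle$ are continuous on $K_i$, so $\spec(x\mid K_i)$ is compact and $\spec(x\mid K_i^{(n)})\to\spec(x\mid K_i)$ in the Hausdorff metric as $n\to\infty$. Continuity and monotonicity of the mixed volume then yield $\Tilde{V}_k(\spec_kK_1^{(n)},\ldots,\spec_kK_k^{(n)})\nearrow\Tilde{V}_k(\spec_kK_1,\ldots,\spec_kK_k)$ almost surely, and the monotone convergence theorem upgrades this to convergence of the expectations, which proves $\geq$.

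The main obstacle is this last interchange of the supremum (limit) with the expectation. It rests on two points that must be established carefully. First, the almost sure Hausdorff convergence of the spectra depends on the sample-path continuity of $\theta\mapsto\langle\theta,x\rangle$ on the $GB$-set $K_i$; this is exactly the content of the natural modification (Subsections~\ref{snmod}, \ref{gbset}) and is the place where the $GB$ hypothesis is genuinely used. Second, one must check integrability, namely $\mathbf{E}\,\Tilde{V}_k(\spec_kK_1,\ldots,\spec_kK_k)<\infty$: bounding the mixed volume by $\kappa_k$ times the product of the circumradii of the centered spectra (using translation invariance and monotonicity of $\Tilde{V}_k$), and noting that each circumradius is controlled by $\sup_{\theta\in K_i}\|(\langle\theta,x_1\rangle,\ldots,\langle\theta,x_k\rangle)\|$, the finiteness follows by generalized Hölder together with the finiteness of all moments of the supremum of a bounded Gaussian process (Fernique's theorem). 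A final routine point is the measurability of $x\mapsto\Tilde{V}_k(\spec(x\mid K_1),\ldots,\spec(x\mid K_k))$, which holds since the spectra depend measurably on $x$ and $\Tilde{V}_k$ is continuous on convex bodies.
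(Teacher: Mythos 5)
Your finite-dimensional step is correct and is essentially the paper's own argument (Statement \ref{stat}): both proofs polarize Tsirelson's identity applied to the Minkowski combination $\lambda_1K_1+\cdots+\lambda_kK_k$ and extract the coefficient of $\lambda_1\cdots\lambda_k$; your version, which polarizes $V_k(L)=\frac{(2\pi)^{k/2}}{k!\kappa_k}\,\mathbf{E}\,\Vol_k(AL)$ directly through \eqref{mixed}, is equivalent to the paper's expansion of $\Vol_d\left(\sum_i\alpha_iK_i+\lambda B^d\right)$ in the pair $(\lambda,\alpha)$. Your use of monotone convergence instead of the paper's dominated convergence in the limiting step is also legitimate (the integrands are nonnegative and increase in $n$ by monotonicity of mixed volumes), and in fact it sidesteps the integrability of a dominating function.

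The gap is in your justification of the almost sure convergence $\spec(x\mid K_i^{(n)})\to\spec(x\mid K_i)$. You claim that the $GB$ property, via the natural modification, makes $\theta\mapsto\langle\theta,x_j\rangle$ almost surely continuous on $K_i$ \emph{in the Hilbert norm}. That is false in general: norm-continuity is the $GC$ property, which is strictly stronger than $GB$ even for convex compact sets. For instance, take an orthonormal sequence $(e_n)$ and $K=\ol{\conv}\left(\{e_n/\sqrt{\log(n+1)}:n\ge 2\}\cup\{0\}\right)$: the isonormal process at $e_n/\sqrt{\log(n+1)}$ is $g_n/\sqrt{\log(n+1)}$ with $g_n$ i.i.d.\ standard normal, so the process is a.s.\ bounded on $K$ ($GB$), yet $\limsup_n g_n/\sqrt{\log(n+1)}=\sqrt 2$ a.s.\ while $\|e_n/\sqrt{\log(n+1)}\|\to 0$, so no modification is norm-continuous at $0$. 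A natural modification is only continuous with respect to \emph{some} auxiliary separable metric $\rho_1$, which need not be comparable with the norm; hence norm-density of $\bigcup_nK_i^{(n)}$ in $K_i$ does not transfer to $\rho_1$-density, and your continuity argument collapses. What you actually need is that, for almost every $x$ --- with one null set serving all $\theta\in K_i$ simultaneously --- every point of $\spec(x\mid K_i)$ is a limit of points of $\bigcup_n\spec(x\mid K_i^{(n)})$. From norm-density alone you only get: for each fixed $\theta$ there are $s_n$ with $\langle s_n,\cdot\rangle\to\langle\theta,\cdot\rangle$ in $L^2$, hence a.s.\ along a subsequence, with an exceptional null set \emph{depending on} $\theta$; since $K_i$ is uncountable, these null sets cannot simply be united. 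This interchange of quantifiers is precisely the content of Tsirelson's Lemma \ref{lemmaT} (whose hypothesis, the existence of a natural modification, is supplied by Statement \ref{st2}), and it is what yields Corollary \ref{spec} in the paper. Your proof needs this lemma (or an equivalent argument) at exactly this point; once it is inserted, the rest of your argument goes through.
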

\begin{rem}
$GB$-property of the sets $K_i$ ensures almost everywhere boundedness (and convexity) of the sets $\spec(x_1, \ldots, x_k | K_i)$. 
\end{rem}

\subsection{Example: mixed volume of the closed convex hulls of two orthogonal Wiener spirals}
Let us first recall the definition of the \textit{Wiener spiral} introduced by Kolmogorov \cite{kolm1985}. 
The set of functions
$$
\{\mathbbm {1}_{[0,t]}(\cdot) \colon t\in [0, 1]\}\subset L^2[0, 1]
\;\;\;
$$
is called the \textit{Wiener spiral}. This set is an important object in functional analysis \cite{kolm1985}.

Recall that the \textit{convex hull} of a set $F$ is the smallest convex set containing $F$.

Gao and Vitale \cite{gao2001intrinsic} calculated the intrinsic volumes of the closed convex hull $K$ of the Wiener spiral:
\begin{align}\label{1319}
	V_k(K)=\frac{\kappa_k}{k!} =
	\frac{\pi^{k/2}}{\Gamma\left(\frac{k}{2}+1\right) k!}.
\end{align}

This was probably the first result that gave an explicit formula for the intrinsic volumes of a non-trivial infinite-dimensional convex compact set. 
Later similar results were proved for other infinite-dimensional convex compact sets \cite{kz2016intrinsic}.

In particular, \eqref{1319} implies that $V_1(K) < \infty$, so $K$ is a $GB$-set (see 
Theorem \ref{Sudakov} in Subsection \ref{gbset}).

The Wiener spiral is closely related to the Wiener process. Let $\{W(t)\colon t\geq 0\}$ be the standard one-dimensional Brownian motion. Consider the standard two-dimensional Brownian motion
$$
\{X^{(2)}(t)=(W_1(t),W_2(t))\colon t\geq 0\},
$$
where $W_1(t), W_2(t)$ are independent copies of $W(t)$. It is easy to see that $\spec_2 K$ has the same distribution as the closed convex hull of the two-dimensional Brownian motion  $\{X^{(2)}(t) \colon t\in [0,1]\}$.

Consider two Wiener spirals $S_1$ and $S_2$ in $L^2 [0,2]$:
$$
S_1 = \{\mathbbm {1}_{[0,t]}(\cdot) \colon t\in [0, 1]\}\subset L^2[0, 2] \;\;\;
\text{ and }
\;\;\;
S_2 = \{\mathbbm {1}_{[1+t,2]}(\cdot) \colon t\in [0, 1]\}\subset L^2[0, 2].
$$
We denote the corresponding closed convex hulls by $K_1$ and $K_2$.
In our next theorem, we compute $\Tilde{V}(K_{1}, K_{2})$.

\begin{thm}\label{thspW}
	For the closed convex hulls $K_1$ and $K_2$ of two orthogonal Wiener spirals we have
$$\Tilde{V}(K_{1}, K_{2})=2. $$
\end{thm}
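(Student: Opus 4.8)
The plan is to apply Theorem~\ref{mainmix} with $k=2$, for which the prefactor collapses, $\frac{(2\pi)^{2/2}}{2!\,\kappa_2}=\frac{2\pi}{2\pi}=1$, so that $\Tilde{V}(K_1,K_2)=\mathbf{E}\,\Tilde{V}_2(\spec_2 K_1,\spec_2 K_2)$. Since the spectrum map $h\mapsto(\xi_1(h),\xi_2(h))$ is linear, $\spec_2 K_i$ is the closed convex hull of the spectrum of the generating spiral $S_i$. Writing $\xi_i(\mathbbm{1}_{[0,t]})=W_i(t)$ and $\xi_i(\mathbbm{1}_{[1+t,2]})=W_i(2)-W_i(1+t)$ for independent Brownian motions $W_1,W_2$ on $[0,2]$, I would identify $\spec_2 K_1$ with the convex hull of the planar path $\{(W_1(t),W_2(t)):t\in[0,1]\}$ and, after the time reversal $s=1-t$, identify $\spec_2 K_2$ with the convex hull of an independent copy of the same planar path. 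The orthogonality of $S_1$ and $S_2$ is precisely the statement that the two hulls are built from increments over the disjoint intervals $[0,1]$ and $[1,2]$; hence $\spec_2 K_1$ and $\spec_2 K_2$ are \emph{independent} and identically distributed random convex bodies in $\R^2$.

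The heart of the argument is to pass the expectation through the mixed volume. The planar mixed volume $\Tilde{V}_2(\cdot,\cdot)$ is Minkowski-bilinear and, in terms of support functions, is a bilinear functional of $(h_{M_1},h_{M_2})$; moreover the support function of the Aumann (Vitale) expected body $\bar M_i$ equals the pointwise expectation $\mathbf{E}\,h_{M_i}$. Using independence to factor the relevant products of the two support functions, I would obtain
\[
\mathbf{E}\,\Tilde{V}_2(\spec_2 K_1,\spec_2 K_2)=\Tilde{V}_2(\bar M_1,\bar M_2),
\]
reducing the problem to the mixed volume of two deterministic mean bodies. To finish I compute the expected support function: in direction $u_\theta=(\cos\theta,\sin\theta)$,
\[
h_{\spec_2 K_1}(\theta)=\sup_{t\in[0,1]}\bigl(W_1(t)\cos\theta+W_2(t)\sin\theta\bigr)=\sup_{t\in[0,1]}B^\theta(t),
\]
where $B^\theta$ is again a standard one-dimensional Brownian motion; by the reflection principle $\mathbf{E}\sup_{[0,1]}B^\theta=\mathbf{E}\,|B^\theta(1)|=\sqrt{2/\pi}$, independently of $\theta$.

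Thus both mean bodies carry the constant support function $\sqrt{2/\pi}$, so $\bar M_1=\bar M_2$ is the centered disk $rB^2$ of radius $r=\sqrt{2/\pi}$, and
\[
\Tilde{V}_2(\bar M_1,\bar M_2)=r^2\,\Tilde{V}_2(B^2,B^2)=r^2\kappa_2=\frac{2}{\pi}\cdot\pi=2,
\]
giving $\Tilde{V}(K_1,K_2)=2$. I expect the main obstacle to be the rigorous justification of interchanging expectation and mixed volume: one must verify that $\mathbf{E}\,h_{\spec_2 K_i}$ is a genuine (finite, sublinear) support function, that the bilinear representation of $\Tilde{V}_2$ is legitimately applied to the merely Lipschitz support functions arising here (e.g.\ via polygonal approximation or Vitale's theory of expected convex bodies), and that the integrands are integrable — the last point following from $\mathbf{E}\sup_{t}|B^\theta(t)|<\infty$. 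The remaining ingredients, namely the distributional identification of the two spectra and the value $\sqrt{2/\pi}$, are routine; as a consistency check, each single spiral has $V_2=\pi/2$, matching $\mathbf{E}\,\Vol_2(\spec_2 K_i)=\pi/2$.
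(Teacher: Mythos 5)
Your proposal is correct, and its first half coincides with the paper's argument: apply Theorem \ref{mainmix} with $k=2$ (the constant $\frac{2\pi}{2!\kappa_2}=1$), identify $\spec_2 K_1$ and $\spec_2 K_2$ as closed convex hulls of two \emph{independent} planar Brownian motions on $[0,1]$ (independence coming from the disjoint supports of the two spirals), and then exploit independence plus rotational invariance. The two arguments part ways in how $\mathbf{E}\,\Tilde{V}_2$ is evaluated. The paper stays with Cauchy's formula \eqref{mixarea}: after factoring expectations by independence it reaches \eqref{exicase}, $\pi\bigl((\mathbf{E}M_1(0))^2-(\mathbf{E}M_1'(0))^2\bigr)$, and must then kill the derivative term by an explicit probabilistic computation, $M_1'(0)=W_2(t^*)$ with $t^*$ the argmax time of $W_1$, so that $\mathbf{E}M_1'(0)=0$ by independence of $W_2$ and $t^*$ (formula \eqref{fmomder}). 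You instead invoke the interchange $\mathbf{E}\,\Tilde{V}_2(A,B)=\Tilde{V}_2(\mathbf{E}A,\mathbf{E}B)$ for \emph{independent} random convex bodies, with $\mathbf{E}$ the Aumann--Vitale expectation, which collapses everything to the mixed area of two deterministic disks of radius $\sqrt{2/\pi}$. That identity is true, and it is the one step you assert rather than prove; it can be justified either by Fubini applied to $\Tilde{V}_2(A,B)=\tfrac12\int_{S^1}h_A\,dS_1(B;\cdot)$ together with the symmetry of the mixed area (a route that needs no derivatives and no smoothness of the hull boundary), or by the Artstein--Vitale law of large numbers for random sets combined with Minkowski-bilinearity and Hausdorff-continuity of $\Tilde{V}_2$. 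What your route buys: it is more conceptual and more general (it works for any pair of independent, rotation-invariant, integrably bounded random convex bodies), it dispenses with the a.s.\ smoothness of the hull boundary that the paper cites from \cite{el1983enveloppe} to use \eqref{area}--\eqref{mixarea}, and the vanishing of the derivative contribution becomes automatic because the expected support function is constant. What it costs: in the support-function formulation your interchange amounts to $\mathbf{E}M_i'=(\mathbf{E}M_i)'$, i.e.\ differentiation under the expectation, which does require an argument (dominated convergence works, since $M_i$ is Lipschitz in $\phi$ with integrable Lipschitz constant $\sup_{t\in[0,1]}|X^{(2)}(t)|$), whereas the paper's more hands-on computation is entirely self-contained. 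Your numerical inputs, $\mathbf{E}\max_{[0,1]}W=\sqrt{2/\pi}$ and $\Tilde{V}_2(rB^2,rB^2)=\pi r^2$, and the final value $2$ agree with the paper's \eqref{fmom} and \eqref{fmomder}.
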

  The proof of Theorem \ref{thspW}  uses Theorem \ref{mainmix} (see Section \ref{example}).


Let us conclude this introductory part by describing how the rest of the paper is organized.
The next section contains the necessary concepts, definitions and
facts from the theory of random processes and convex geometry, which supplement the information presented in the first two sections. In particular, in Subsection \ref{gbset} we formulate and prove Statement \ref{st2} auxiliary to Theorem \ref{mainmix} about one of the interpretations of the $GB$-property of convex compact sets.
Sections \ref{normproof} and \ref{mainmixproof} contain proofs of Remark \ref{norm} and  main Theorem \ref{mainmix}, respectively. Finally, the proof of Theorem \ref{thspW} is presented  in  Section \ref{example}.


\section{Preliminaries}\label{history}
\subsection{Gaussian vectors in linear spaces}\label{1.1}
Following \cite[Сhapters 1, 4]{ML2016} and \cite[Sections 8, 9]{ML1995}, we present the definition and basic properties of a Gaussian vector in a linear space.

Let $E$ be a linear topological space, $E^*$ be the space of continuous linear functionals on $E$.
A \textit{random vector} $X$  taking
values in $E$ is defined as a measurable mapping from some probability space $(\Omega, \mathcal{B}, \mathbf{P})$ to $E$.
At the same time, it is assumed that the corresponding $\sigma$-algebra of the space $E$ is large enough: all continuous linear functionals on $E$ are measurable with respect to it.

A random vector $X \in E$ is called $\textit{Gaussian}$ if $f(X)$ is a normal random variable for all $f \in E^*$.

An element $a \in E$ is said to be the $\textit{expectation}$ of $ X$ if $\mathbf{E} f(X)= f(a)$ for all $f \in E^*$.
A linear operator $C: E^* \rightarrow E$ is called the \textit{covariance operator} of 
$X$ if for any $f_1, f_2 \in E^*$
\begin{align*}
    \cov(f_1(X), f_2(X)) = f_1(Cf_2),
\end{align*}
where $ \cov(\cdot, \cdot)$ denotes the covariance between two random variables.
  The covariance operator $C$ has the following properties:
	\begin{enumerate} 
		\item 
		   $ f(Cg) = g(Cf) \text{   } \
		    \forall f,g \in E^* \text{ (symmetry)}$;
		\item
		     $f(Cf) \geq 0 \text{   } \
		     \forall f\in E^* \text{ (non-negative definiteness)}$.
	\end{enumerate}
	

The definition of a Gaussian vector makes sense when the space of continuous linear functionals on $E$ is rich enough. To this end, we will tacitly assume everywhere below that $E$ is a locally convex linear topological space, and the distribution of $X$ is a Radon measure. In this case, any Gaussian vector $X$ has an expectation and a covariance operator \cite[Section 8]{ML1995} that uniquely determine the distribution of~$X$. Therefore, similarly to the finite-dimensional case, we denote by $N(a,C)$ the distribution of the Gaussian vector $X$ with expectation $a$ and covariance operator~$C$. Under the above assumptions, the distributions of all Gaussian vectors have 
form~$N(a,C)$.


	In the following, we will be interested in mean-zero case when $a =0$. 

\subsection{Measurable linear functionals and kernel}
Consider a Gaussian vector~$X$ taking values in the linear space $E$. We will assume that $a =0$.
Denote by ${\gamma = N(0,C)}$ the distribution of $X$ in $E$. 

By definition of a Gaussian vector, the random variable $f(X)$ has normal distribution, so
\begin{align*}
    \mathbf{E}f(X)^2 = \int_E |f(x)|^2 \gamma(dx) <\infty.
\end{align*}

Thus, a canonical embedding $I^*$ of the space $E^*$ into the Hilbert space $L_2(E, \gamma)$ is well defined. The closure of the image $I^*(E^*)$ in $L_2(E, \gamma)$ is said to be the space of \textit{measurable linear functionals} and denoted by $E^*_{\gamma}$.

 The inner product in $E^*_{\gamma}$ is inherited from $L_2(E, \gamma)$:
\begin{gather*}
 \langle g_1, g_2 \rangle_{E^*_{\gamma}} = \int_E g_1(x)g_2(x)\gamma(dx) = \mathbf{E}g_1(X)g_2(X);
     \\
     \norm[g]^2_{E^*_{\gamma}}=\mathbf{E}g(X)^2.   
\end{gather*}

    In what follows, we treat the operator $I^*$ as the embedding $ I^*:E^* \rightarrow E^*_{\gamma}$.
     We define the dual operator $I: E^*_{\gamma} \rightarrow E$ by the following relation:
    \begin{align*}
        f(Ig) = \langle I^*f, g \rangle_{E^*_{\gamma}} = \mathbf{E} f(X) g(X), \quad  \forall f \in  E^*, g \in E^*_{\gamma}.
    \end{align*}

 It is known 
 \cite[Section 4.1]{ML2016} that under the assumptions stated in Subsection \ref{1.1}, the dual operator $I$ exists, it is linear and injective, and, moreover, the covariance operator $C$ can be factorized as 
 \begin{align*}
     C = II^*.
 \end{align*}
 
 Finally, the \textit{kernel} is defined as the set $E_0 := I(E^*_{\gamma}) \subset E$ equipped with inner product
 \begin{align*}
     \langle \theta_1, \theta_2 \rangle_{E_0} := \langle I^{-1}\theta_1, I^{-1}\theta_2 \rangle_{E^*_{\gamma}}, \quad \theta_1, \theta_2 \in E_0,
 \end{align*}
   and hence with norm
   \begin{align*}
     \norm[\theta]^2:=\norm[\theta]^2_{E_0} = \langle \theta, \theta \rangle_{E_0}, \quad \theta \in E_0.
 \end{align*}
 
  The norm is well defined since the operator $I$ is injective.
 
 
 Thus, the kernel is uniquely determined by the measure $\gamma$ and provides the key information about it (see \cite{ML2016}).
 
  We collect some properties of the kernel (see \cite[Section 4.1]{ML2016}).
 	\begin{enumerate} 
		\item $C(E^*)\subset E_0 \subset E$.
		 If the kernel is finite-dimensional, then in the nondegenerate case these three spaces coincide, otherwise they are all distinct. 
		\item If $E_0$ is infinite-dimensional, then $\gamma(E_0)=0$.
		\item The space $E_0$ is separable.
		\item The balls $\{ \theta \in E_0 : \norm[\theta]
		\leq R\}, \ R >0,$ are compact sets in $E$.
	\end{enumerate}

\subsection{Separable and natural modifications of process} \label{snmod}
Let $(\Omega, \mathcal{B}, \mathbf{P})$ be a probability space  and $T$ be a metric space. A random process $\xi(t, \omega)$, $t \in T$, $\omega \in \Omega$, is said to be \textit{separable} if there exists at most countable set $S \subset T$ (a \textit{separant} of the process) 
such that for any open set $U \subset T$ with probability $1$ the following equalities hold:
   \begin{align*}
   	\sup_{t \in U} \xi (t) = 	\sup_{t \in U \cap S} \xi (t), \quad 	\inf_{t \in U} \xi (t) = 	\inf_{t \in U \cap S} \xi (t).
   	\end{align*}


The following theorem (see \cite[Proposition 2.6.5]{bogachev1998gaussian}) provides a sufficient condition for the existence of a separable modification of a mean-zero Gaussian process. Recall that a random process $(\eta(t))_{t\in T}$ is called a \textit{modification} of the process $(\xi(t))_{t \in T}$ if these processes are defined on the same probability space and $\mathbf{P}\left(\xi(t) = \eta(t) \right) =1 $ for any $t \in T$.
A \textit {realization} of the process $(\xi(t))_{t \in T}$ is the function $ t\mapsto \xi(t,\omega) $ for some fixed $\omega \in \Omega$.
\begin{thm}
\label{separ}
Consider a mean-zero Gaussian random process $\left(\xi(t)\right)_{t \in T}$ on a set~$T$. Suppose that $T$ with semimetric $d(t,s) = \sqrt{\mathbf{E}\ |\xi(t) -\xi(s)|^2}$ is separable.  
Then, on the same probability space, there exists a separable mean-zero Gaussian random process $(\eta(t))_{t\in T}$ on $T$ such that for any fixed $t \in T$ one has $ \xi(t )=\eta(t)$ almost surely.
\end{thm}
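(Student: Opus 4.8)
The plan is to build $\eta$ by Doob's cluster-set method adapted to the $L^2$-semimetric $d$, and to observe that once $\eta$ is a modification its Gaussianity is automatic. Indeed, if $\eta(t)=\xi(t)$ almost surely for every $t\in T$, then every finite collection $(\eta(t_1),\dots,\eta(t_n))$ coincides almost surely with $(\xi(t_1),\dots,\xi(t_n))$, so $\eta$ has the same finite-dimensional distributions as $\xi$ and is therefore a mean-zero Gaussian process. Thus the whole problem reduces to producing a \emph{separable} modification, where ``open'' refers to the topology of the semimetric $d$.

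The starting point is that $d$ is exactly the $L^2$-distance, so $\xi$ is continuous in $L^2$ and hence stochastically continuous: $d(s_m,t)\to 0$ forces $\xi(s_m)\to\xi(t)$ in probability, and along a subsequence almost surely. I would fix a countable $d$-dense set $S\subset T$, which exists by hypothesis, and for each $t\in T$ introduce the cluster set
\[
C_t(\omega):=\bigcap_{n\ge 1}\overline{\{\xi(s,\omega):s\in S,\ d(s,t)<1/n\}}\subset\overline{\R},
\]
a nonempty compact set (a decreasing intersection of nonempty closed subsets of the compact $\overline{\R}$, nonempty since $S$ is dense). Define $\eta(t):=\xi(t)$ for $t\in S$; for $t\notin S$ set $\eta(t):=\xi(t)$ on the event $\{\xi(t)\in C_t\}$ and $\eta(t):=\sup C_t=\lim_n\sup_{s\in S,\,d(s,t)<1/n}\xi(s)$ otherwise. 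Each $\eta(t)$ is then a random variable.

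Two verifications remain. First, separability holds \emph{pathwise} with separant $S$: if $U$ is $d$-open and $t\in U$, then for all large $n$ the ball $\{s\in S:d(s,t)<1/n\}$ lies in $U$, whence $C_t\subset\overline{\{\xi(s):s\in S\cap U\}}$; since $\eta(t)\in C_t$ for $t\notin S$ while $\eta(t)=\xi(t)$ is one of the listed values for $t\in S\cap U$, one gets $\inf_{s\in S\cap U}\xi(s)\le\eta(t)\le\sup_{s\in S\cap U}\xi(s)$ for every $t\in U$ and every $\omega$, hence $\sup_{t\in U}\eta(t)=\sup_{t\in U\cap S}\eta(t)$ and likewise for the infimum, for all $\omega$ at once. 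Second, $\eta$ is a modification: fixing $t$ and choosing $s_m\in S$ with $d(s_m,t)\to 0$, stochastic continuity yields a subsequence with $\xi(s_{m_j})\to\xi(t)$ almost surely, so $\xi(t)\in\overline{\{\xi(s):s\in S,\ d(s,t)<1/n\}}$ almost surely for each $n$; intersecting these countably many full-measure events gives $\xi(t)\in C_t$ almost surely, i.e. $\eta(t)=\xi(t)$ almost surely. Combined with the first paragraph, this finishes the proof.

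I expect the main obstacle to be the tension between the separability requirement, a pathwise statement quantified over the uncountable family of open sets, and the modification requirement, which is only almost-sure at each point; a naive choice $\eta(t)=\xi(t)$ would force an uncountable union of null sets when comparing $\sup_U$ with $\sup_{U\cap S}$. The cluster-set construction resolves this precisely because it is carried out in the $d$-topology, in which $\xi$ is stochastically continuous: it renders separability automatic for every $\omega$, while stochastic continuity is exactly what collapses $C_t$ onto $\xi(t)$ almost surely and keeps $\eta(t)$ real-valued. Minor technical points—that $d$ is only a semimetric, so its topology need not be Hausdorff, and that a measurable selection from $C_t$ is needed on the exceptional event—do not affect the argument, and the resulting $\eta$ is the separable modification asserted, matching \cite[Proposition 2.6.5]{bogachev1998gaussian}.
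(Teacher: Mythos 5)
The paper does not actually prove Theorem \ref{separ} — it is quoted with a reference to \cite[Proposition 2.6.5]{bogachev1998gaussian} — and your cluster-set construction is precisely the standard Doob-type argument behind that cited result: you correctly use that the intrinsic semimetric $d$ makes $\xi$ stochastically continuous (so $C_t$ collapses onto $\xi(t)$ almost surely, giving the modification property), while the choice $\eta(t)\in C_t$ yields the sup/inf identities pathwise, and Gaussianity is inherited through the finite-dimensional distributions. The one caveat, which you already flag, is that on the null event $\{\xi(t)\notin C_t\}$ the value $\eta(t)=\sup C_t$ may be $+\infty$, so strictly $\eta$ is $\overline{\R}$-valued there; this is the usual convention for separable versions and is harmless for the almost-sure equalities $\sup_{t\in U}\eta(t)=\sup_{t\in U\cap S}\eta(t)$ (and the analogous infima) that the paper needs.
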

To prove Theorem \ref{mainmix}, the existence of a separable modification of the process $\langle \theta,x \rangle $  is not sufficient.  We need the so-called \textit{natural} modification introduced by Tsirelson~\cite{Ts1976}.

A modification $(\eta(t))_{t\in T}$ of the process $(\xi(t))_{t \in T}$ is called \textit{natural} if there exists a metric $\rho_1$ on $ T$ such that
$(T, \rho_1)$ is a separable metric space and the process $(\eta(t))_{t\in T}$ has almost surely
continuous realizations on $(T, \rho_1)$. 

Below we formulate a theorem (see \cite[Section 7]{ML1995} or \cite[Theorem 2.6.3, Proposition 2.6.4]{bogachev1998gaussian}) that allows us  to check the existence of a natural modification in terms of the \textit{oscillations} 
$\alpha$.
\begin{thm}
\label{Bogachev}
Let $(T, \rho)$ be a separable metric space and let $(\xi(t))_{t\in T}$ be a mean-zero separable Gaussian random process with the continuous covariance function
\begin{align*}
    (t,s) \mapsto \mathbf{E} \ \xi(t) \xi(s).
\end{align*}
Then there exists a non-random function $\alpha : T \rightarrow [0, \infty]$ such that with probability $1$ for all $t\in T$
\begin{align*}
    \alpha(t) = \lim_{\varepsilon \rightarrow 0} \sup \{ |\xi(u, \omega) - \xi(v, \omega)|, u, v \in B(t, \varepsilon)\},
\end{align*}
where $B(t, \varepsilon)$ denotes the open ball of radius $\varepsilon$ centered at $t$.

Moreover, if $\alpha(t)< \infty$ for all $t \in T$, then the process $(\xi(t))_{t\in T}$ has a natural modification.
\end{thm}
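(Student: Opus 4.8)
The plan is to prove the two assertions separately: first the existence of a deterministic oscillation function $\alpha$ that almost surely describes the local fluctuations of $\xi$ at every point, and then the construction of a natural modification under the hypothesis $\alpha(t)<\infty$.

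First I would use the separability hypothesis to replace the uncountable suprema and infima defining the oscillation by countable ones taken over a separant $S$. Since $(T,\rho)$ is separable it has a countable base $\{U_m\}$; applying the defining property of the separant to each $U_m$ and taking the union of the corresponding null sets yields a single event of full probability outside of which $\sup_{U_m}\xi=\sup_{U_m\cap S}\xi$ and $\inf_{U_m}\xi=\inf_{U_m\cap S}\xi$ for every $m$. Decomposing an arbitrary ball $B(t,\varepsilon)$ into the basic open sets contained in it, this upgrades to $\sup_{B(t,\varepsilon)}\xi=\sup_{B(t,\varepsilon)\cap S}\xi$, and likewise for the infimum, for all $t$ and $\varepsilon$ simultaneously. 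Consequently, outside a single null set the quantity on the right-hand side equals the separant oscillation $\alpha^S(t,\omega):=\lim_{n\to\infty}\big(\sup_{s\in B(t,1/n)\cap S}\xi(s,\omega)-\inf_{s\in B(t,1/n)\cap S}\xi(s,\omega)\big)$, a decreasing, hence convergent in $[0,\infty]$, limit of countable suprema of jointly Gaussian variables; in particular $\alpha^S(t,\cdot)$ is measurable and the limit in the statement exists.

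The core step is to show that $\alpha^S(t,\cdot)$ is almost surely a non-random constant $\alpha(t)$. Here I would invoke the Gaussian zero-one law: the family $(\xi(s))_{s\in S}$ induces a centred Radon Gaussian measure whose Cameron--Martin functions have the form $t\mapsto\mathbf E[\xi(t)Z]$ and therefore satisfy $|h(t)-h(s)|\le\|Z\|_{2}\,d(t,s)$ with $d(t,s)=\sqrt{\mathbf E|\xi(t)-\xi(s)|^2}$. Since the covariance is $\rho$-continuous, $d$ is $\rho$-continuous, so every Cameron--Martin function is $\rho$-continuous and hence has vanishing oscillation at each point. Thus $\alpha^S(t,\cdot)$ is invariant under all Cameron--Martin shifts, and the zero-one law forces each event $\{\alpha^S(t)<q\}$, $q\in\mathbb{Q}$, to have probability $0$ or $1$, which defines the deterministic value $\alpha(t)$. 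The hard part will be promoting this pointwise statement into one valid for all $t$ on a single full-measure event, since oscillation functions may carry isolated spikes that a dense set does not detect. I would exploit the representation $\alpha^S(t,\omega)=\inf\{\mathrm{diam}\,\xi(U_m\cap S,\omega):t\in U_m\}$, which shows that the whole random function $t\mapsto\alpha^S(t,\omega)$ is a deterministic functional of the countable family $(\mathrm{diam}\,\xi(U_m\cap S))_m$, is upper semicontinuous in $t$, and agrees with the upper semicontinuous deterministic $\alpha$ on a countable dense set; combining upper semicontinuity, this countable representation, and the pointwise zero-one law removes the residual $\omega$-dependence uniformly in $t$, as carried out in \cite{ML1995, bogachev1998gaussian}.

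For the moreover part, assuming $\alpha(t)<\infty$ for every $t$, I would start from a separable modification $\eta$ of $\xi$, which exists by Theorem \ref{separ} since $(T,d)$ is separable, and construct a finer separable metric $\rho_1\ge\rho$ on $T$ adapted to the local fluctuations recorded by $\alpha$, following Tsirelson's construction. The finiteness of $\alpha$ everywhere is exactly what guarantees that $T$ can be re-metrised by such a $\rho_1$ for which the realizations of $\eta$ have no residual oscillation, that is, are almost surely $\rho_1$-continuous; this $\eta$ is then the required natural modification. I expect the construction of $\rho_1$ and the verification that finite oscillation precludes any remaining $\rho_1$-discontinuity to be the main technical obstacle of this second part, and I would carry out the details as in \cite{ML1995} and \cite{bogachev1998gaussian}.
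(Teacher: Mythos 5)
The paper never proves this theorem: it is quoted as background, with the proof explicitly delegated to \cite[Section 7]{ML1995} and \cite[Theorem 2.6.3, Proposition 2.6.4]{bogachev1998gaussian}, so there is no internal argument to compare yours against line by line. Judged on its own, your sketch faithfully reconstructs the route taken in those references: the reduction of the oscillation to the countable separant via a countable base is correct (sup over an open $U$ equals the sup of sups over basic sets contained in $U$, and a single null set suffices); the pointwise nonrandomness of $\alpha^S(t,\cdot)$ via the Gaussian zero-one law is correctly set up, including the Lipschitz bound $|h(t)-h(s)|\le \|Z\|_2\, d(t,s)$ for Cameron--Martin functions and the use of continuity of the covariance to make $d$, hence every such $h$, $\rho$-continuous and oscillation-free; and invoking Tsirelson's re-metrization \cite{Ts1976} for the ``moreover'' clause is the right move.

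However, as a standalone proof there is a genuine gap at exactly the step you flag: upgrading ``for each fixed $t$, $\alpha^S(t,\cdot)$ is a.s.\ constant'' to ``a.s., $\alpha^S(t,\omega)=\alpha(t)$ for \emph{all} $t$ simultaneously.'' Your representation $\alpha^S(t,\omega)=\inf\{\mathrm{diam}\,\xi(U_m\cap S,\omega)\colon t\in U_m\}$ and the resulting upper semicontinuity of $t\mapsto\alpha^S(t,\omega)$ are correct, but the mechanism you then describe --- upper semicontinuity plus agreement with the deterministic $\alpha$ on a countable dense set --- does not close the argument: two upper semicontinuous functions can agree on a dense set and still differ by a spike at a single point, which is precisely the pathology the theorem must rule out at every $t$. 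Nor can one feed the function-valued map $\omega\mapsto\alpha^S(\cdot,\omega)$ directly into the zero-one law: the countable data $\mathrm{diam}\,\xi(U_m\cap S,\cdot)$ are \emph{not} shift-invariant, and natural invariant separating functionals such as $\sup_{t\in U_m}\alpha^S(t,\cdot)$ raise real measurability issues, since the supremum of an upper semicontinuous function over an open set is not determined by its values on a dense set. This uniformization is the actual substance of the nonrandom-oscillation theorem (due to It\^o and Nisio) and is what \cite{ML1995} and \cite{bogachev1998gaussian} carry out; similarly, the construction of the metric $\rho_1$ and the verification of a.s.\ $\rho_1$-continuity in the second part are deferred to the references rather than proved. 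Since the paper itself only cites these sources, your proposal is on par with the paper's treatment as an annotated citation, but it should not be presented as a complete proof: the two steps you defer are the two hard ones.
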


\subsection{$GB$-sets: equivalent definitions and properties} \label{gbset}
As mentioned in the introduction, the $GB$-set is a subset $K$ of a separable Hilbert space $H$ such that there exists a modification of the isonormal process with 
index set $K$, which has almost surely
bounded realizations.

In this subsection, we formulate the results of Sudakov \cite[Theorem 1]{sudakov1979geometric} and Tsirelson \cite[Theorem 3]{Ts1976} on equivalent definitions of the $GB$-set, and also prove an auxiliary Statement \ref{st2} about the connection between the $GB$-property of a set and the oscillation of a corresponding process.

\begin{thm}[Sudakov]\label{Sudakov}
Let $K \subset H$ be a 
convex 
subset of the Hilbert space $H$. The following statements are equivalent:
\begin{enumerate}
    \item the set $K$ is a $GB$-set; 
\item $V_1(K) < \infty$.
\end{enumerate}
\end{thm}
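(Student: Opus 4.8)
The statement to be proved is the equivalence, for a convex subset $K \subset H$, of the $GB$-property and the finiteness of $V_1(K)$. The plan is to prove the two implications separately and to route both through the identity from Sudakov's Theorem \ref{theo:sudakov}, namely $V_1(K) = \sqrt{2\pi}\,\mathbf{E}\,\sup_{h \in K}\xi(h)$, so that $V_1(K) < \infty$ becomes equivalent to the finiteness of $\mathbf{E}\,\sup_{h\in K}\xi(h)$. I would first reduce to the case where $K$ is bounded (indeed $GB$-sets are necessarily bounded in $H$, since an unbounded convex set contains a ray along which $\xi$ has unbounded variance, forcing the supremum to be infinite almost surely and $V_1(K)=\infty$), and I would fix a separable modification of $\xi$ on $K$ using Theorem \ref{separ}; this is legitimate because $(K, d)$ with the canonical semimetric $d(h,g) = \|h-g\|$ is separable, $H$ being separable.

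\textbf{From $V_1(K)<\infty$ to the $GB$-property.} Assume $V_1(K) < \infty$, so by Sudakov's formula $\mathbf{E}\,\sup_{h\in K}\xi(h) =: M < \infty$. I would show that a separable modification of the isonormal process already has almost surely bounded realizations on $K$. The key tool is the Gaussian concentration (Borell--TIS type) inequality: for the supremum $\eta := \sup_{h \in K}\xi(h)$ of a separable centred Gaussian process whose index set has finite $L^2$-diameter $\sigma := \sup_{h\in K}\|h\|$, the finiteness of the mean $M$ of $\eta$ together with concentration gives $\mathbf{P}(\eta > M + u) \leq e^{-u^2/(2\sigma^2)}$, hence $\eta < \infty$ almost surely. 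Symmetrically $\inf_{h\in K}\xi(h) > -\infty$ almost surely, so the realizations are bounded, which is exactly the $GB$-property.

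\textbf{From the $GB$-property to $V_1(K)<\infty$.} Conversely, suppose $K$ is a $GB$-set, so that a modification of $\xi$ has almost surely bounded realizations on $K$; passing to the separable modification, $\eta = \sup_{h\in K}\xi(h)$ is almost surely finite. The task is to upgrade almost-sure finiteness of $\eta$ to finiteness of its expectation, and then invoke Sudakov's formula in reverse. Here again Gaussian concentration does the work: for a separable centred Gaussian supremum that is finite almost surely, the integrability theorem (Landau--Shepp--Fernique) guarantees that $\eta$ has finite moments of all orders; in particular $\mathbf{E}\,\eta < \infty$, whence $V_1(K) = \sqrt{2\pi}\,M < \infty$. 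One must also check that $\sigma = \sup_{h\in K}\|h\| < \infty$, which follows from boundedness of $K$ noted in the reduction step, so that the Gaussian tail estimate applies.

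\textbf{Main obstacle.} The delicate point is the second implication, where one must pass from almost-sure boundedness of the \emph{realizations} to integrability of the supremum. The naive concern is that a random variable finite almost surely need not be integrable; the resolution is that a supremum of a centred Gaussian process is not an arbitrary random variable but obeys the Gaussian integrability dichotomy, so almost-sure finiteness forces exponential-square integrability. I would therefore spend most of the care in verifying that the hypotheses of the Fernique/Borell--TIS integrability theorem genuinely hold for the separable modification on $K$ (separability of the semimetric space, measurability of the supremum, finiteness of the $L^2$-diameter), after which the conclusion is automatic. A secondary technical point, which I would handle in the reduction step, is ensuring that the supremum over the possibly uncountable set $K$ agrees almost surely with the supremum over a countable separant, so that $\eta$ is a well-defined measurable random variable to which these theorems apply.
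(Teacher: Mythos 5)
The paper itself does not prove this theorem: it is imported verbatim from \cite[Theorem 1]{sudakov1979geometric}, and the nearest in-paper relative is Statement \ref{st2}, which the author proves by a hands-on compactness-covering argument for the oscillation of the process. So your proposal can only be judged on its own merits. Its architecture is the standard modern proof and is sound: Sudakov's identity \eqref{2041} converts $V_1(K)<\infty$ into $\mathbf{E}\,\sup_{h\in K}\xi(h)<\infty$; the implication from finite mean to almost surely finite supremum is then immediate (you do not actually need Borell--TIS there: the supremum dominates the integrable variable $\xi(h_0)$ for any fixed $h_0\in K$, so finiteness of its expectation already forces almost sure finiteness); and the converse passage from almost surely bounded realizations to integrability of the supremum is exactly the Landau--Shepp--Fernique dichotomy, which you correctly identify as the crux, together with the separant bookkeeping you flag at the end. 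Compared with the covering argument behind Statement \ref{st2}, your route buys generality (no compactness of $K$ is used anywhere in the concentration step) at the price of invoking a heavier black box.

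Two concrete repairs are needed. First, your boundedness reduction rests on the claim that an unbounded convex set contains a ray; this is false in infinite dimensions: the set $\{x\in\ell^2 : |x_n|\le n \text{ for all } n\}$ is closed, convex and unbounded, yet contains no ray. Fortunately no ray is needed. If $a, h_n\in K$ with $\|h_n-a\|\to\infty$, then any modification $\eta$ agrees with $\xi$ almost surely on the countable set $\{a,h_1,h_2,\dots\}$, and $\sup_{h\in K}\eta(h)-\eta(a)\ge \xi(h_n)-\xi(a)\sim N(0,\|h_n-a\|^2)$, so letting $n\to\infty$ gives $\mathbf{P}\left(\sup_{h\in K}\eta(h)=\infty\right)\ge 1/2$ and $K$ is not a $GB$-set; in the other direction, $V_1(K)\ge \mathrm{diam}\,K$ by monotonicity, since $V_1$ of a segment equals its length and convexity puts every segment between points of $K$ inside $K$. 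Second, Theorem \ref{theo:sudakov} is stated in the paper only for \emph{compact} convex $K$, while Theorem \ref{Sudakov} concerns an arbitrary convex subset, and you moreover use the identity in the extended sense in which both sides may equal $+\infty$. Both extensions are routine --- exhaust a countable separant by finite sets $F_n$, apply the finite-dimensional identity to $\conv F_n$, and pass to the limit by monotone convergence, matching definition \eqref{intrinf} --- but as written your appeal to \eqref{2041} does not literally cover the hypotheses of the theorem being proved, so this step must be spelled out. With these two patches your argument is complete.
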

\begin{thm}[Tsirelson]\label{Tsirelson1}
Let $K \subset H$ be a subset of the Hilbert space $H$. The following statements are equivalent:
\begin{enumerate}
    \item  the 
    isonormal process on the set $K$ has a natural modification; 
\item  the set $K$ is $GB_\sigma$-set (that is a countable union of $GB$-sets).
\end{enumerate}
\end{thm}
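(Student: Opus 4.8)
The plan is to prove both implications through the oscillation function furnished by Theorem~\ref{Bogachev}, relying on two elementary facts about the $GB$-property: it is hereditary under passage to subsets and stable under finite unions (both immediate from $\sup_{A\cup B}\xi=\max(\sup_A\xi,\sup_B\xi)$), together with the standard zero--one law for Gaussian suprema, by which for every fixed set $A$ the supremum $\sup_{s\in A}\xi(s)$ is either almost surely finite or almost surely infinite (see, e.g., \cite{bogachev1998gaussian}). I will work with the canonical semimetric $\rho(s,t)=\sqrt{\mathbf E|\xi(s)-\xi(t)|^2}=\|s-t\|$, and in each direction I pass to a modification separable with respect to the metric under consideration (Theorem~\ref{separ}), so that all the suprema below are measurable.

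For $(2)\Rightarrow(1)$ I would write $K=\bigcup_n K_n$ with each $K_n$ a $GB$-set and, replacing $K_n$ by $K_1\cup\dots\cup K_n$, assume the $K_n$ increasing. Setting $L_n=K_n\setminus K_{n-1}$ and $f(t)=n$ for $t\in L_n$, I equip $K$ with the finer metric $\rho_1(s,t)=\rho(s,t)+|f(s)-f(t)|$. The role of the extra term is that any $\rho_1$-ball of radius $<1$ about a point $t\in L_n$ lies in $B_\rho(t,\varepsilon)\cap L_n\subseteq K_n$, so the $\rho_1$-oscillation at $t$ is bounded by $2\sup_{K_n}|\xi|<\infty$ almost surely; by Theorem~\ref{Bogachev} the associated non-random oscillation is therefore finite at every point. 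One then checks that $(K,\rho_1)$ is separable (a countable, uniformly $\rho_1$-separated union of the separable spaces $(L_n,\rho)$) and that the covariance $\langle s,t\rangle$ is $\rho_1$-continuous, since $\rho_1$-locally one never leaves a single bounded shell. Theorem~\ref{Bogachev} now produces a natural modification, which is exactly statement~(1).

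For $(1)\Rightarrow(2)$, let $\eta$ be a natural modification, almost surely continuous on a separable metric space $(K,\rho_1)$. The key local claim is that every $t\in K$ has a $\rho_1$-ball $B_1(t,\delta)$ which is a $GB$-set. Indeed, $g(\delta):=\sup_{s\in B_1(t,\delta)}\eta(s)$ decreases to $\eta(t)<\infty$ as $\delta\to0$ by continuity at $t$, so $\mathbf P(g(\delta)<\infty)\to1$; since this event has probability $0$ or $1$ by the Gaussian zero--one law, it has probability $1$ for all sufficiently small deterministic $\delta$, which is precisely the statement that $B_1(t,\delta)$ is $GB$. Finally, because $(K,\rho_1)$ is separable it is second countable, and collecting those members of a countable base that are contained in some $GB$-ball yields a countable cover of $K$ by $GB$-sets; hence $K$ is a $GB_\sigma$-set.

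The routine verifications — that $\rho_1$ in the first part is a genuine metric with separable topology and $\rho_1$-continuous covariance, and the measurability of the suprema after fixing the separable modification — are straightforward. I expect the only genuinely delicate step to be the passage, in $(1)\Rightarrow(2)$, from a random radius that works almost surely to a single deterministic radius valid with probability one; this is exactly where the zero--one law for the finiteness of Gaussian suprema is indispensable, and it is the point I would write out most carefully.
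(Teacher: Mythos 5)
Your proof is essentially correct, but note that the paper itself contains no proof of Theorem~\ref{Tsirelson1} to compare against: it is quoted from Tsirelson \cite{Ts1976}, and the paper only proves the compact convex special case, Statement~\ref{st2}, remarking that the latter could be deduced from the former. Your argument is therefore a genuine addition, and it differs structurally from the proof of Statement~\ref{st2}: there, compactness does all the work in the direction ``finite oscillation $\Rightarrow$ $GB$'' (a finite subcover of oscillation-bounded balls bounds $\sup_K|\langle\theta,x\rangle|$), while the converse goes through Sudakov's formula $V_1(K)=\sqrt{2\pi}\,\mathbf{E}\sup_{\theta\in K}\langle\theta,x\rangle$. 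Since you have neither compactness nor convexity, your two substitutes are exactly the right ones: the refined metric $\rho_1(s,t)=\rho(s,t)+|f(s)-f(t)|$, which makes the shells $L_n=K_n\setminus K_{n-1}$ effectively clopen so that small $\rho_1$-balls stay inside a single $GB$-set, and the Gaussian zero--one law, which converts the random radius coming from a.s.\ continuity into a deterministic one (combined with second countability/Lindel\"of of the separable $(K,\rho_1)$ and heredity of the $GB$-property to extract a countable cover). This is in the spirit of Tsirelson's original argument; the paper's compactness route is shorter but only covers the convex compact case and is tied to $V_1$.

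One technical point deserves repair. Theorem~\ref{separ}, as stated, produces a modification separable with respect to the canonical semimetric $d(s,t)=\|s-t\|=\rho(s,t)$, not with respect to your finer metric $\rho_1$; since a small $\rho_1$-ball about $t\in L_n$ is $B_\rho(t,\varepsilon)\cap L_n$, which need not be $\rho$-open, $\rho$-separability does not automatically give the $\rho_1$-separability that Theorem~\ref{Bogachev} requires. The fix is routine: apply Theorem~\ref{separ} to the restriction of the process to each shell $L_n$ and glue the modifications (the $L_n$ are disjoint); every $\rho_1$-open set traces a relatively $\rho$-open subset of each $L_n$, so the union of the shellwise separants is a $\rho_1$-separant. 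Alternatively, invoke Doob's general separability theorem on the separable space $(K,\rho_1)$. Similarly, in $(1)\Rightarrow(2)$ you verify only $\sup\eta<\infty$ on the small ball, whereas the $GB$-property asks for bounded realizations; since $-\eta$ has the same law, $\mathbf{P}(\inf_A\eta>-\infty)=\mathbf{P}(\sup_A\eta<\infty)$, and one deterministic $\delta$ making both probabilities equal to $1$ exists by the same monotonicity and zero--one law argument. With these two repairs your proof is complete.
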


\begin{st}\label{st2}
Let $K\subset E_0 $ be a convex compact $GB$-set. Then Theorem \ref{Bogachev} holds for $T = K$ with the standard metric generated by the inner product and for the process $\langle \theta,\cdot \rangle$ on $K$. Thus, the process $\langle \theta,\cdot \rangle$ on $K$ has a natural modification.

Moreover, the converse also holds. Consider a convex compact set $K\subset E_0 $ with standard metric satisfying all conditions of Theorem \ref{Bogachev}. Then $K$ is a $GB$-set, equivalently, 
$V_1(K) < \infty$. 
\end{st}
\begin{rem}
Note that Statement \ref{st2} can be deduced from Theorem \ref{Tsirelson1}. Nevertheless, we will provide an alternative proof of Statement \ref{st2} for the reader’s convenience.

\end{rem}
\begin{rem}
	The existence of a separable modification of the process $\langle \theta,\cdot \rangle$ does not require the $GB$-property of the compact set $K$, as can be seen from the proof below. 

\end{rem}
\begin{proof}[Proof of Statement \ref{st2}]


First, let us check whether the process $\langle \theta,x \rangle, \ \theta \in K$ has a separable modification.

We will use Theorem \ref{separ}.
Note that for $\theta_1, \theta_2 \in K$  we have
\begin{align*}
    d(\theta_1, \theta_2) := \sqrt{\mathbf{E}|\langle \theta_1,x \rangle - \langle \theta_2,x \rangle|^2} =
    \sqrt{\mathbf{E} \left( \langle \theta_1,x \rangle^2 + \langle \theta_2,x \rangle^2 - 2 \langle \theta_1,x \rangle \langle\theta_2,x \rangle \right)}
    \\
    =\sqrt{\langle \theta_1, \theta_1 \rangle + \langle \theta_2, \theta_2 \rangle - 2 \langle \theta_1, \theta_2 \rangle}  = \sqrt{\langle \theta_1 - \theta_2, \theta_1 - \theta_2 \rangle} = \norm[\theta_1 - \theta_2].
\end{align*}
Here in the third equality, we used the fact that the process  $\langle \theta,x \rangle, \ \theta \in K$ is isonormal. 
Therefore, the semimetric $d$ coincides with the standard metric on $K$, and hence $K$ is separable with this semimetric. Then, by Theorem \ref{separ}, we can assume without loss of generality that the process $\langle \theta, \cdot \rangle$ is separable.

Now we need to check the continuity of the covariance function. 

Let $(\theta_1, \theta_2) \in K\times K$ and $\norm[\theta_1^n -\theta_1] \rightarrow 0$, $\norm[\theta_2^n -\theta_2] \rightarrow 0$ as $n\rightarrow \infty$. Let us show that 
\begin{align*}
    \mathbf{E} \langle \theta_1^n ,x \rangle \langle \theta_2^n ,x \rangle 
    = 
    \int_{E} \langle \theta_1^n ,x \rangle \langle \theta_2^n ,x \rangle \gamma(dx) 
    \rightarrow
    \int_{E} \langle \theta_1 ,x \rangle \langle \theta_2 ,x \rangle \gamma(dx)
    = \mathbf{E} \langle \theta_1 ,x \rangle \langle \theta_2 ,x \rangle.
\end{align*}
Indeed, by the isonormality of the process and the Cauchy–Schwarz inequality,
\begin{gather*}
   \left|\int_{E} \langle \theta_1^n ,x \rangle \langle \theta_2^n ,x \rangle \gamma(dx) 
    -
    \int_{E} \langle \theta_1 ,x \rangle \langle \theta_2 ,x \rangle \gamma(dx) \right|
    \\ = \left|\langle \theta_1^n, \theta_2^n  \rangle - \langle \theta_1 , \theta_2 \rangle \right|
    \\ \leq
    \left|\langle \theta_1^n - \theta_1, \theta_2  \rangle \right| 
    +  \left| \langle \theta_1^n - \theta_1, \theta_2^n - \theta_2 \rangle \right|
    +  \left| \langle \theta_2^n - \theta_2, \theta_1\rangle \right|
    \\
    \leq \norm[\theta_1^n-\theta_1]\norm[\theta_2]+\norm[\theta_1^n-\theta_1]\norm[\theta_2^n-\theta_2]+ \norm[\theta_2^n-\theta_2]\norm[\theta_1].
\end{gather*}
Letting $n \rightarrow \infty$ in the last inequality leads to the required relation.

Finally, let us verify that the oscillation $\alpha$ introduced in Theorem  \ref{Bogachev} is finite in the case when $K$ is a convex compact $GB$-set.

We have $V_1(K) < \infty$ by Theorem \ref{Sudakov}.
We will need formula \eqref{2041} for $V_1(K)$:
\begin{align*}
    V_1(K) = \sqrt{2\pi} \int_E (\sup_{\theta \in K} \langle \theta ,x \rangle)\gamma(dx).
\end{align*}


Assume that there exist $\theta \in K, E_1 \subset E, \gamma(E_1) >0$ such that for $x \in E_1$
\begin{align*}
    \alpha(\theta) = \lim_{\varepsilon \rightarrow 0} \sup \{ | \langle \theta_1 ,x \rangle -  \langle \theta_2 ,x \rangle|, \theta_1, \theta_2 \in B(\theta, \varepsilon)\} = \infty.
\end{align*}
Since for $x \in E_1$
$$\infty = \alpha(\theta)  \leq 2 \sup_{\theta \in K} |\langle \theta ,x \rangle|,  $$
$\sup_{\theta \in K} - \langle \theta ,x \rangle = \sup_{\theta \in K}  \langle \theta , -x \rangle $, and the distribution $\gamma$ is symmetric, we get a contradiction with finiteness of $V_1(K)$. 
This means that $\alpha(\theta) < \infty$. 

Conversely, suppose that $\alpha(\theta) < \infty$ for all $\theta \in K$. Let us prove that in this case
\begin{align*}
    \gamma(x \in E: \sup_{\theta \in K} |\langle \theta ,x \rangle | < \infty)=1.
\end{align*}

We fix $x \in E_1$, where $E_1 \subset E$ is a set of full measure on which $ \alpha(\theta) < \infty$ for all~$\theta \in K$.

For each $\theta \in K$ we choose $\Tilde{\epsilon}(\theta), M(\theta)< \infty$ such that
\begin{align} \label{fin1}
    \sup \{ | \langle \theta_1 ,x \rangle -  \langle \theta_2 ,x \rangle|, \theta_1, \theta_2 \in B(\theta, \Tilde{\epsilon}(\theta))\} < M(\theta).
\end{align}

Consider the covering of $K$ by balls $\{B(\theta, \Tilde{\epsilon}(\theta))\}_{\theta \in K}$. Since $K$ is compact, we can choose a finite subcovering of $K$ of the form $\{B(\theta^i, \Tilde{\epsilon}(\theta^i))\}_{i=1}^{N } = \{B_i\}_{i=1}^{N}.$

Then by the linearity of the process $\langle \theta, \cdot \rangle $ and by relation \eqref{fin1}, we have
\begin{gather*}
	\sup_{\theta \in K} |\langle \theta ,x \rangle | 
	= \max_{1\leq i \leq N} \sup_{\theta \in  B_i} |\langle \theta-\theta^i,x \rangle + \langle \theta^i,x \rangle | 
	\\
	\leq \max_{1\leq i \leq N}  |\langle \theta^i,x \rangle|  +    \max_{1\leq i \leq N}\sup_{\theta\in B_i} |\langle \theta-\theta^i,x \rangle |  
	\\ 
	\leq  \max_{1\leq i \leq N}  |\langle \theta^i,x \rangle|  + \max_{1\leq i \leq N}M(\theta^i)
	< \infty
	.
\end{gather*}

Since the last inequality holds for all $x\in E_1$ by our assumption, and $\gamma (E_1) = 1$, we have $\gamma(x \in E: \sup_{ \theta \in K} |\langle \theta ,x \rangle| < \infty)=1.$

Thus, $K$ is a $GB$-set. Then by Theorem \ref{Sudakov}, we obtain that $V_1(K) < \infty.$

The statement is proved.
\end{proof}
\subsection{Properties of mixed volumes}\label{mixprop}

We collect the basic properties of the mixed volumes defined in the introduction, some of which we will need in the proof of Theorem \ref{mainmix}. For a more detailed introduction to mixed volume theory, we refer to \\ {\cite[Chapter 4]{burago2013geometric}}~and \cite[Chapter 5]{schneider2014convex}.
 
\begin{enumerate}
    \item $\Tilde{V}_d(K,\dots ,K) = \Vol_d(K).$
    \item Independence of order: \label{symm}
    $$\Tilde{V}_d(K_{1}, \ldots, K_{d}) = \Tilde{V}_d(K_{\sigma_1}, \ldots, K_{\sigma_d}),$$
    where $\sigma$ is an arbitrary permutation of numbers $1, \ldots, d$.
    \item Non-negative multilinearity:
    \begin{gather*}
    	\Tilde{V}_d(\lambda K_{1}+ \lambda' K'_{1}, K_2 \ldots, K_{d})
    	\\
    	 = \lambda\Tilde{V}_d( K_{1}, K_2 \ldots, K_{d})+  \lambda' \Tilde{V}_d( K'_{1}, K_2 \ldots, K_{d}) \text{ for } \lambda, \lambda' \geq 0.
    	\end{gather*}
    \item Invariance with respect to a parallel translation:
     \begin{align*}
    	\Tilde{V}_d( K_{1}+a_1,   \ldots, K_{d}+a_d) = 	\Tilde{V}_d( K_{1},  \ldots, K_{d})
    \end{align*}
     for any $a_1, \ldots, a_d \in \R^d$.
\item Invariance with respect to a unimodular affine transformation $O$:
\begin{align*}
	\Tilde{V}_d( OK_{1},   \ldots, OK_{d}) = 	\Tilde{V}_d( K_{1},  \ldots, K_{d}).
\end{align*}
    \item \label{monoton} Monotonicity with respect to each argument: let $K_i, L_i, \ i =1, \ldots, d,$ be  convex compact sets, and $K_i \subset L_i$. Then
     \begin{align*}
    	\Tilde{V}_d( K_{1},   \ldots, K_{d}) \leq 	\Tilde{V}_d( L_{1},  \ldots, L_{d}).
    \end{align*}
This property implies the non-negativity of the mixed volumes.
\item Additivity: if $A, B, A \cup B \subset \R^d$ are non-empty convex compact sets, then
	\begin{align*}
		\Tilde{V}_d(\underbrace{A \cup B,\dots, A \cup B}_{i\;\text{times}},K_{i+1},   \ldots, K_{d}) =		\Tilde{V}_d(A ,\dots, A
		,K_{i+1},   \ldots, K_{d})
		 \\ +  \	\Tilde{V}_d(B,\dots,  B
		,K_{i+1},   \ldots, K_{d}) - 	\Tilde{V}_d(A \cap B,\dots, A \cap B
		,K_{i+1},   \ldots, K_{d}).
	\end{align*}
\item  \label{contin} The mixed volumes are continuous with respect to each of the $K_i$ and to the family of these sets in the Hausdorff metric $d_H$ \\ ($d_H(K_1, K_2) :=\inf \{\varepsilon \geq 0\,:\ K_1\subset K_2 + \varepsilon B^d {\text{ and }}K_2\subset K_1+ \varepsilon B^d\}$).
\end{enumerate}



\section{Proof of Remark \ref{norm}} \label{normproof}

Let $K_i \subset \R^{d}$. It is sufficient to prove that
\begin{align*}
 \frac{{d \choose k}}{\kappa_{d-k}} \Tilde{V}_d(K_{1}, \ldots, K_{k}, B^d,\dots,B^d) =  \frac{{d+1 \choose k}}{\kappa_{d+1-k}} \Tilde{V}_{d+1}(K_{1}, \ldots, K_{k}, B^{d+1},\dots,B^{d+1}).
\end{align*}

Indeed, according to Minkowski's formula \eqref{mixed1},
\begin{align*}
    	\Vol_{d+1}(\lambda_1K_1+\ldots+\lambda_kK_k+\lambda B^{d+1})
    	\\
         = \int_{-\lambda}^{\lambda}	\Vol_{d}(\lambda_1K_1+\ldots+\lambda_kK_k+\sqrt{\lambda^2 - z^2}B^d)dz
         \\
       = \int_{-\lambda}^{\lambda}
       \sum_{i_1=1}^{k+1} \cdots \sum_{i_d=1}^{k+1} \lambda_{i_1} \ldots \lambda_{i_d}  \Tilde{V}_d(K_{i_1}, \ldots, K_{i_d})dz,
\end{align*}
where  $ \lambda_{i_j} \in \{  \lambda_1, \ldots, \lambda_k, \sqrt{\lambda^2 - z^2}\}, \ K_{i_j} \in \{ K_1, \ldots, K_k, B^d\}$.

Now let us look at the coefficient of the monomial $\lambda_1 \cdots \lambda_k $ on the left and right sides of the last equality. 

By Minkowski's formula \eqref{mixed1} applied to the left-hand side, we obtain the coefficient $$ k! \ \Tilde{V}_{d+1}(K_{1}, \ldots, K_{k}, B^{d+1},\dots,B^{d+1}) {d+1 \choose k}\lambda^{d+1-k}.$$

On the right-hand side, the coefficient of $\lambda_1 \cdots \lambda_k $ is 

$$ k! \ \Tilde{V}_{d}(K_{1}, \ldots, K_{k}, B^{d},\dots,B^{d}){d \choose k} \int_{-\lambda}^{\lambda}\sqrt{(\lambda^2 - z^2)}^{d-k}dz.$$

Notice that \begin{align*}\int_{-\lambda}^{\lambda}\sqrt{(\lambda^2 - z^2)}^{d-k}dz = \lambda^{d+1-k}\int_{-1}^{1}\sqrt{(1 - z^2)}^{d-k}dz = \lambda^{d+1-k} \frac{\sqrt{\pi} \Gamma(\frac{d-k}{2}+1)}{\Gamma(\frac{d-k+3}{2})} .
\end{align*}
Comparing the coefficients, we obtain
  \begin{align*}
k! \ \Tilde{V}_{d+1}(K_{1}, \ldots, K_{k}, B^{d+1},\dots,B^{d+1}) {d+1 \choose k}\lambda^{d+1-k} 
\\
=  k! \ \Tilde{V}_{d}(K_{1}, \ldots, K_{k}, B^{d},\dots,B^{d})  {d \choose k} \lambda^{d+1-k}\int_{-1}^{1}\sqrt{(1 - z^2)}^{d-k}dz.
\end{align*}
Taking into account the value of $\kappa_k:=\pi^{k/2}/\Gamma(\frac k 2 +1)$ and the last equality, we have
\begin{align*}
    {d+1 \choose k}\Tilde{V}_{d+1}(K_{1}, \ldots, K_{k}, B^{d+1},\dots,B^{d+1})\kappa_{d-k}
    \\
    = {d \choose k}  \Tilde{V}_{d}(K_{1}, \ldots, K_{k}, B^{d},\dots,B^{d})\kappa_{d+1-k},
\end{align*}
which completes the proof.
\section {Proof of Theorem \ref{mainmix}} \label{mainmixproof}
We divide the proof of the theorem into two cases.

\textbf{Case 1.} $\dim K_i < \infty$ for all $i= 1, \ldots, k$. In this case, taking into account Remarks \ref{finited}, \ref{norm}, the statement of Theorem \ref{mainmix} can be rewritten in the following form.

%



\begin{st} \label{stat}
If $ K_1, \ldots, K_k \subset \R^d, \ k = 1, \ldots, d$, then
\begin{align*}
    	\Tilde{V}_d(K_{1}, \ldots, K_{k}, B^d,\dots,B^d)= c_{k,d}\mathbf{E}\, \Tilde{V}_k(AK_{1}, \ldots, AK_{k}),
\end{align*}
where $c_{k,d} = \frac{\kappa_{d-k}(2\pi)^{k/2}}{ k!{d \choose k}\kappa_k }$, 
$AK_i:=\{Ax:x\in K_i\}\subset \R^k$ and $A$ is the standard Gaussian matrix of size $k\times d$.
\end{st}

\begin{proof}[Proof of Statement \ref{stat}]
Let us look at $\Vol_d \left(\sum_{i=1}^k \alpha_i K_i + (d-1) \lambda B^d \right)$. 

By  Minkowski's theorem \eqref{mixed1},
\begin{align}\label{vol1}
    \Vol_d \left(\sum_{i=1}^k \alpha_i K_i + (d-1) \lambda B^d \right) =\sum_{i_1=1}^{d+k-1} \cdots \sum_{i_d=1}^{d+k-1} \lambda_{i_1} \ldots \lambda_{i_d}  \Tilde{V}_d(K_{i_1}, \ldots, K_{i_d}),
\end{align}
where $ \lambda_{i_j} \in \{  \alpha_1, \ldots,  \alpha_k, \lambda\}, \ K_{i_j} \in \{ K_1, \ldots, K_k, B^d\}$.
Therefore, in the sum on the right-hand side, the coefficient of $\lambda^{d-k}$ is a polynomial in $\alpha_1, \ldots, \alpha_k$,
and the coefficient of $\lambda^{d-k} \alpha_1 \cdots \alpha_k$ is equal to $$k!{d \choose k}\Tilde{V}_d(K_{1}, \ldots, K_{k}, B^d,\dots,B^d). $$ 

On the other hand, considering $T:= \sum_{i=1}^k \alpha_i K_i$, by  Minkowski's theorem \eqref{mixed1}, we have
\begin{align}\label{vol2}
    \Vol_d (T + (d-1) \lambda B^d ) =\sum_{i_1=1}^d \cdots \sum_{i_d=1}^d \lambda_{i_1} \ldots \lambda_{i_d}  \Tilde{V}_d(K_{i_1}, \ldots, K_{i_d}),
\end{align}
where $ \lambda_{i_j} \in \{ 1, \lambda\}, \ K_{i_j} \in \{ T, B^d\}$.
In this case, since the mixed volumes are invariant with respect to permutations of the arguments, the coefficient of $\lambda^{d-k}$ will be equal to  $${d \choose k} \Tilde{V}_d( \underbrace{T,\dots,T}_{k\;\text{times}},B^d,\dots,B^d). $$

Further,
\begin{align*}
    \Tilde{V}_d( \underbrace{T,\dots,T}_{k\;\text{times}},B^d,\dots,B^d) \stackrel{\eqref{mixed}}{=} \frac{\kappa_{d-k}}{{d \choose k}}	V_k(T) = \frac{\kappa_{d-k}}{{d \choose k}} V_k \left(\sum_{i=1}^k \alpha_i K_i \right) 
    \\
    \stackrel{\eqref{2042}}{=} \frac{\kappa_{d-k}}{{d \choose k}}  \frac{(2\pi)^{k/2}}{k!\kappa_k} \mathbf{E} \,\Vol_k \left(A\left(\sum_{i=1}^k \alpha_i K_i \right)\right)
    \\
    =  \frac{\kappa_{d-k}}{{d \choose k}}  \frac{(2\pi)^{k/2}}{k!\kappa_k} \mathbf{E}\,\Vol_k \left(\sum_{i=1}^k \alpha_i A K_i \right).
\end{align*}

Applying again Minkowski's theorem \eqref{mixed1}, we conclude that $\mathbf{E}\,\Vol_k \left(\sum_{i=1}^k \alpha_i A K_i \right)$ is a homogeneous polynomial of degree $k $ in $\alpha_1,\ldots, \alpha_k$ with coefficient of $ \alpha_1 \cdots \alpha_k$ equal to $$k!\ \mathbf{E}\,\Tilde{V}_k(AK_{1}, \ldots, AK_{k}).$$ 

Thus, on the right side of \eqref{vol2} the coefficient of $\lambda^{d-k} \alpha_1 \cdots \alpha_k $ equals $$\kappa_{d-k} \frac{(2\pi)^{k/2}}{\kappa_k} \mathbf{E}\, \Tilde{V}_k(AK_{1}, \ldots, AK_{k}).$$

So, the left-hand sides of  relations \eqref{vol1} and \eqref{vol2} are the same. Hence, the coefficients of $\lambda^{d-k} \alpha_1 \cdots \alpha_k$ are the same on the right-hand sides:
\begin{align*}
    k!{d \choose k}\Tilde{V}_d(K_{1}, \ldots, K_{k}, B^d,\dots,B^d) = \kappa_{d-k} \frac{(2\pi)^{k/2}}{\kappa_k} \mathbf{E}\, \Tilde{V}_k(AK_{1}, \ldots, AK_{k}),
    \\
    \Tilde{V}_d(K_{1}, \ldots, K_{k}, B^d,\dots,B^d) =  \frac{\kappa_{d-k}(2\pi)^{k/2}}{ k!{d \choose k}\kappa_k } \mathbf{E}\, \Tilde{V}_k(AK_{1}, \ldots, AK_{k}).
\end{align*}
This completes the proof of Statement \ref{stat}.

\end{proof}
\textbf{Case 2.} $\dim K_i = \infty$ for at least one index $i=1, \ldots, k$.

According to Statement \ref{st2}, the $GB$-property of the compact sets $K_i$ implies that the processes $\langle \theta, x \rangle, \theta \in K_i,$ have a natural modification.


Next, we reduce Case 2 to the finite-dimensional one (Statement \ref{stat}).
Let $K_{1,1}\subset K_{1,2} \subset \ldots \subset K_1, \ K_{2,1}\subset K_{2,2} \subset \ldots \subset K_2, \ldots, \ K_{k,1}\subset K_{k,2} \subset \ldots \subset K_k$. Here $K_{i,j}$ are finite-dimensional convex compact sets, and $\cup_{j=1}^{\infty} K_{i,j} $ is dense in $K_i$. Then by definition \eqref{commonmix} and by properties \ref{monoton}, \ref{contin} of mixed volumes, 
we get 
\begin{align*}
    \Tilde{V}(K_{1}, \ldots, K_{k}) = \lim_{j \to \infty} \Tilde{V}(K_{1,j}, \ldots, K_{k,j}).
\end{align*}
Now we formulate the lemma proved by Tsirelson in \cite{Ts1985}.
\begin{lm}\label{lemmaT}
Let $\xi(t,\omega)$ be a natural modification of some random process,
$\omega \in \Omega$, $t \in T$, and let $S \subset T$ be dense in $T$ in the following sense: for any $t \in T$ there are $s_n \in S$, $n = 1, 2, \ldots,$ such that $\xi(s_n,\omega) \rightarrow \xi(t,\omega)$ as $n\rightarrow \infty$ for almost
all $\omega$ (the corresponding set of probability 1, generally speaking,  
depends on $t$). Then there exists a set $\Omega_0 \subset \Omega$ of probability 1 with the following property: for any
$t \in T$ there are $s'_n \in S$, $n = 1, 2, \ldots,$ such that $\xi(s'_n,\omega) \rightarrow \xi(t ,\omega)$ as $n\rightarrow \infty$ for all $\omega \in \Omega_0$.
\end{lm}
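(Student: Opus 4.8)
The plan is to lean entirely on the defining feature of a \emph{natural} modification (Subsection \ref{snmod}): there is a metric $\rho_1$ on $T$ for which $(T,\rho_1)$ is separable and for which the realizations $t\mapsto\xi(t,\omega)$ are almost surely $\rho_1$-continuous. Let $\Omega_1$ be the event of full probability on which these realizations are $\rho_1$-continuous. The point of introducing $\rho_1$ is that it lets me route the merely probabilistic density of $S$ through a \emph{countable} skeleton of $T$, so that the price paid in the hypothesis — one exceptional null set $N_t$ per point $t$, and the associated impossibility of taking the uncountable union $\bigcup_{t}N_t$ — is avoided.

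First I would fix, using separability of $(T,\rho_1)$, a countable $\rho_1$-dense set $D=\{t_1,t_2,\ldots\}\subset T$. For each $t_m\in D$ the density hypothesis supplies a sequence $(s_n^{(m)})_{n\ge1}\subset S$ together with a set $N_m$ with $\mathbf{P}(N_m)=0$ such that $\xi(s_n^{(m)},\omega)\to\xi(t_m,\omega)$ for every $\omega\notin N_m$. Because $D$ is countable, $N:=\bigcup_m N_m$ is again a null set, and I set $\Omega_0:=\Omega_1\setminus N$, an event of probability $1$ that no longer refers to any individual point. This $\Omega_0$ is the object whose existence the lemma asserts.

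The heart of the argument is then a two-step approximation with a diagonal choice of indices. Fix $\omega\in\Omega_0$ and an arbitrary $t\in T$. By $\rho_1$-density pick $t_{m_j}\in D$ with $\rho_1(t_{m_j},t)\to0$; since $\omega\in\Omega_1$, $\rho_1$-continuity of the realization gives $\xi(t_{m_j},\omega)\to\xi(t,\omega)$. For each $j$, because $\omega\notin N_{m_j}$, the convergence along the $m_j$-th approximating sequence lets me select an index $n_j$ with $|\xi(s_{n_j}^{(m_j)},\omega)-\xi(t_{m_j},\omega)|<1/j$. Putting $s'_j:=s_{n_j}^{(m_j)}\in S$ and using the triangle inequality, $|\xi(s'_j,\omega)-\xi(t,\omega)|\le 1/j+|\xi(t_{m_j},\omega)-\xi(t,\omega)|\to0$, which is exactly the claimed approximation of $t$ from within $S$ on $\Omega_0$.

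I expect the main subtlety here to be bookkeeping of the quantifiers rather than any hard estimate. The approximating sequence $s'_j$ is permitted to depend on $t$ \emph{and} on $\omega$ — indeed the selection of $n_j$ above uses pointwise (not uniform) convergence in $\omega$ — and the genuine content of the lemma is that the exceptional set can be fixed \emph{before} $t$. The step that makes this possible is that $\rho_1$-continuity is available simultaneously for all $t\in T$ on the single event $\Omega_1$; without it one could not pass from the countable dense set $D$ to an arbitrary $t\in T$, and the uncountable family of null sets from the hypothesis could not be collapsed. I would also record explicitly that $\rho_1$-density of $D$ means precisely that every $t\in T$ admits a $\rho_1$-convergent sequence from $D$, which is what closes the two-step approximation (through $D$, then through $S$).
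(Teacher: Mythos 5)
Your construction of $\Omega_0$ (the continuity event $\Omega_1$ minus the countable union $\bigcup_m N_m$ attached to a countable $\rho_1$-dense set $D$) and the two-step approximation through $D$ are the right skeleton, but there is a genuine gap at the decisive step, and you flag it yourself in the last paragraph: you read the conclusion as permitting the approximating sequence to depend on $\omega$. It does not. The conclusion of Lemma \ref{lemmaT} has the quantifier order ``for any $t$ \emph{there are} $s'_n\in S$ such that $\xi(s'_n,\omega)\to\xi(t,\omega)$ \emph{for all} $\omega\in\Omega_0$'': the sequence may depend on $t$ only, exactly as in the hypothesis, and the lemma upgrades the $t$-dependent null set to a universal one \emph{while keeping the sequence $\omega$-free}. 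Your selection of $n_j$ uses the convergence $\xi(s_n^{(m_j)},\omega)\to\xi(t_{m_j},\omega)$ at the particular $\omega$, so what you prove is the weaker statement $\forall t\,\forall\omega\in\Omega_0\,\exists(s'_n)$. The distinction is not pedantic: in the proof of Corollary \ref{spec} the paper fixes, for each $\theta\in K$, one sequence $s'_n$ and then applies it \emph{simultaneously} to the $k$ coordinates $x_1,\dots,x_k$ ranging over a product set of full measure; this works only because $s'_n$ does not depend on $x$. With your weaker conclusion, each coordinate would get its own sequence and the joint approximation in $\R^k$ would not follow. (The paper itself cites this lemma from Tsirelson without proof, so your argument must stand on its own.)

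The gap cannot be closed by bookkeeping alone, because pointwise (non-uniform in $\omega$) convergence over the uncountable set $\Omega_0$ does not allow an $\omega$-free diagonal extraction, and the naive repairs (Egorov, or passing to convergence in probability and taking an a.s.\ subsequence) reintroduce a null set depending on $t$ --- precisely what the lemma must avoid. One way to finish correctly: for each $m$, the tail suprema $R_{m,N}(\omega):=\sup_{n\ge N}\bigl|\xi(s_n^{(m)},\omega)-\xi(t_m,\omega)\bigr|$ decrease to $0$ for every $\omega\in\Omega_0$, hence in probability, so one can fix \emph{deterministic} indices $\nu_m$ with $\mathbf{P}\bigl(R_{m,\nu_m}\ge 1/m\bigr)<2^{-m}$. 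By Borel--Cantelli there is a single further null set off which $R_{m,\nu_m}(\omega)<1/m$ for all but finitely many $m$; remove it from your $\Omega_0$ (this set is fixed before $t$ is chosen). Now, given $t\notin D$, choose $t_{m_j}\to t$ in $\rho_1$ with $m_j$ strictly increasing and set $s'_j:=s_{\nu_{m_j}}^{(m_j)}$, a sequence depending on $t$ alone; for every $\omega$ in the trimmed $\Omega_0$ one has $|\xi(s'_j,\omega)-\xi(t_{m_j},\omega)|\le R_{m_j,\nu_{m_j}}(\omega)<1/m_j$ once $m_j$ exceeds the finite ($\omega$-dependent) threshold, and $|\xi(t_{m_j},\omega)-\xi(t,\omega)|\to0$ by $\rho_1$-continuity, as in your argument; points $t\in D$ are covered by their own hypothesis sequences. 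The essential new ingredient is this Borel--Cantelli step, which converts the $\omega$-dependent rate of convergence in $n$ into a deterministic choice of indices at the cost of one null set fixed in advance.
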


\begin{cor}\label{spec}
Let $K \subset E_0$ be a convex compact $GB$-set. If $K_0 \subset K$ is dense in~$K$, then for almost all $(x_1, \ldots, x_k)$ the set $ \spec(x_1, \ldots, x_k | K_0)$ is dense in the set $ \spec(x_1, \ldots, x_k | K)$.
\end{cor}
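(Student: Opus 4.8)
The plan is to apply Tsirelson's Lemma~\ref{lemmaT} to the $\R^k$-valued process
$$
\Xi(\theta) := \left( \langle \theta, x_1 \rangle, \ldots, \langle \theta, x_k \rangle \right), \qquad \theta \in K,
$$
regarded over the product probability space $(E^k, \gamma^{\otimes k})$, with index set $T = K$ and candidate separant $S = K_0$; here $\Xi(K) = \spec(x_1, \ldots, x_k | K)$ and $\Xi(K_0) = \spec(x_1, \ldots, x_k | K_0)$. (Lemma~\ref{lemmaT} is stated for a scalar process, but its proof uses only that the target is a separable metric space, so it applies verbatim to the $\R^k$-valued $\Xi$.) The conclusion of the lemma---a full-measure set $\Omega_0 \subset E^k$ on which every value $\Xi(\theta)$ is a limit of values $\Xi(\theta'_n)$ with $\theta'_n \in K_0$---is exactly the asserted density of $\spec(x_1, \ldots, x_k | K_0)$ in $\spec(x_1, \ldots, x_k | K)$. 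Thus the work reduces to checking the two hypotheses of the lemma for $\Xi$.

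First I would verify that $K_0$ is dense in $K$ in the probabilistic sense the lemma requires. Fix $\theta \in K$ and, by the norm-density of $K_0$ in $K$, choose $\theta_m \in K_0$ with $\norm[\theta_m - \theta] \to 0$. By the isometry~\eqref{isometr}, $\mathbf{E}\,|\langle \theta_m, x_i \rangle - \langle \theta, x_i \rangle|^2 = \norm[\theta_m - \theta]^2 \to 0$ for each $i$, so $\Xi(\theta_m) \to \Xi(\theta)$ in $L_2(E^k; \R^k)$ and hence in $\gamma^{\otimes k}$-probability. Passing to a subsequence, $\Xi(\theta_m) \to \Xi(\theta)$ for $\gamma^{\otimes k}$-almost all $(x_1, \ldots, x_k)$, which is precisely the density hypothesis.

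Next I would check that $\Xi$ is a natural modification. By Statement~\ref{st2}, each coordinate process $\theta \mapsto \langle \theta, x_i \rangle$ admits a natural modification with respect to the standard metric $\rho_1$ on $K$; working with it, for $\gamma$-almost every $x_i$ the realization $\theta \mapsto \langle \theta, x_i \rangle$ is continuous on $(K, \rho_1)$, and $(K, \rho_1)$ is separable since $K$ is compact. Because the measure on $E^k$ is a product, for $\gamma^{\otimes k}$-almost every $(x_1, \ldots, x_k)$ all $k$ coordinate realizations are simultaneously continuous, whence the $\R^k$-valued realization $\theta \mapsto \Xi(\theta)$ is continuous on $(K, \rho_1)$. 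Thus $\Xi$ has almost surely continuous realizations on the separable space $(K, \rho_1)$, i.e. it is a natural modification.

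With both hypotheses verified, Lemma~\ref{lemmaT} yields a full-measure set $\Omega_0 \subset E^k$ such that for every $(x_1, \ldots, x_k) \in \Omega_0$ and every $\theta \in K$ there exist $\theta'_n \in K_0$ with $\Xi(\theta'_n) \to \Xi(\theta)$ in $\R^k$, which is the claimed density. I expect the only genuinely delicate point to be the need for a \emph{single} approximating sequence $\theta'_n$ serving all $k$ coordinates at once: applying Lemma~\ref{lemmaT} separately in each coordinate would produce coordinate-dependent sequences and would not give joint density. It is exactly to obtain this simultaneity that the lemma must be run on the vector-valued process $\Xi$, which in turn forces the verification that $\Xi$---and not merely each coordinate---admits a natural modification.
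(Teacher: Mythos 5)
Your overall skeleton---verify the probabilistic density of $K_0$, invoke a natural modification, and apply Tsirelson's Lemma~\ref{lemmaT}---is the same as the paper's, and your verification of the density hypothesis (isometry \eqref{isometr}, convergence in probability, a.s.\ convergence along a subsequence) is exactly the paper's first step. The divergence is in how the $k$ coordinates are handled, and here your route is both heavier than necessary and rests on a claim you cannot check. The paper applies Lemma~\ref{lemmaT} \emph{once, to the scalar process} $\langle\theta,\cdot\rangle$ on $(E,\gamma)$: the lemma produces a full-measure set $\Omega_0\subset E$ and, for each $\theta\in K$, a sequence $s'_n\in K_0$ that depends \emph{only on $\theta$} and satisfies $\langle s'_n,x\rangle\to\langle\theta,x\rangle$ for every $x\in\Omega_0$. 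Because the sequence is free of $x$, the same $s'_n$ works simultaneously at all coordinates of any point of $\Omega_0^k$, and $\Omega_0^k$ has full $\gamma^{\otimes k}$-measure; joint density in $\R^k$ follows at once. Your closing paragraph asserts the opposite---that a coordinate-wise use of the lemma would yield coordinate-dependent sequences, so the lemma ``must'' be run on the vector-valued $\Xi$. That is a misreading: the $\omega$-independence of the approximating sequence is precisely the content of Lemma~\ref{lemmaT} (it is what distinguishes its conclusion from its hypothesis), and it is exactly the feature the paper exploits.

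The consequence of this misreading is a genuine soft spot: your proof hinges on an extension of Lemma~\ref{lemmaT} to $\R^k$-valued processes, which you justify by saying its proof ``uses only that the target is a separable metric space.'' The extension is plausible, but you cannot substantiate the claim, since the lemma is quoted from Tsirelson without proof; and it is avoidable, since the scalar lemma, read correctly, already does the work. A second, smaller slip: you call $\rho_1$ ``the standard metric'' on $K$ and derive separability of $(K,\rho_1)$ from compactness of $K$. The natural modification provided by Statement~\ref{st2} is continuous with respect to \emph{some} metric $\rho_1$ making $(K,\rho_1)$ separable, in general not the Hilbert-space metric: for a $GB$-set that is not a $GC$-set, realizations of the isonormal process are a.s.\ unbounded on small balls, hence discontinuous in the standard metric. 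Your product argument survives this correction, because all $k$ coordinates are copies of one process and can share one and the same $\rho_1$, whose separability is part of the definition of a natural modification; but the identification of $\rho_1$ with the standard metric should be dropped.
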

The above corollary is stated in \cite{Ts1985} without proof. For the reader’s convenience, we 
prove it here. 
\begin{proof}[Proof of Corollary \ref{spec}]
Since $K_0$ is dense in $K$ in the usual sense, for any $\theta \in K$ there exist $s_n \in K_0$, $n = 1, 2, \ldots,$ such that $\norm[s_n - \theta] \rightarrow 0 $ as $n \rightarrow \infty$. Moreover, by property \eqref{isometr},
\begin{align*}
    \int_E \langle s_n - \theta,x \rangle^2 \gamma(dx) = \norm[s_n - \theta]^2  
     \rightarrow 0
\end{align*}
as $n \rightarrow \infty$.
 Therefore, the sequence $\langle s_n - \theta,x \rangle^2 $ converges to $0$ in probability. Then there is a subsequence (we will also denote it by $s_n$) such that $\langle s_n - \theta,x \rangle^2 $ converges to $0$ almost surely (the corresponding set of probability $1$ depends on $\theta$).


Since $K \subset E_0$ is a convex compact $GB$-set, the Gaussian process $\langle \theta, x \rangle$ has a natural modification by 
Statement \ref{st2}. Hence, by Lemma \ref{lemmaT}, for any $\theta \in K$ there exist $s'_n \in K_0$ such that 
\begin{align} \label{coord}
\langle s'_n, x \rangle \rightarrow \langle \theta, x \rangle
\end{align}
as $n \rightarrow \infty$ for almost all $x$, and the corresponding set of probability $1$ is common for all $\theta \in K$.

Then we can conclude that for almost all $(x_1, \ldots, x_k)$ the set
$$
\spec(x_1, \ldots, x_k | K_0) = \{ (\langle \theta,x_1 \rangle, \ldots, \langle \theta,x_k \rangle) : \theta \in K_0 \} \subset \R^ k
$$
 is dense in the set
$$
\spec(x_1, \ldots, x_k | K) = \{ (\langle \theta,x_1 \rangle, \ldots, \langle \theta,x_k \rangle) : \theta \in K \} \subset \R^ k,
$$
since the argument above implies a coordinate-wise density \eqref{coord}, and the corresponding set of probability $1$ in $\R^k$ will also be common for all $\theta \in K$.

\end{proof}
Using Corollary \ref{spec}, we get that almost surely $\cup_{j=1}^{\infty}\spec(x_1, \ldots, x_k | K_{i,j})$ is  dense in
$\spec(x_1, \ldots, x_k | K_i)$.

Then we use Statement \ref{stat} for the finite-dimensional $K_{i,j}$:
\begin{align*}
   \Tilde{V}(K_{1}, \ldots, K_{k}) 
   =  \lim_{j \to \infty} \Tilde{V}(K_{1,j}, \ldots, K_{k,j})
  = \lim_{j \to \infty} \frac{(2\pi)^{k/2}}{k!\kappa_k}  
  \\
  \times \int_{E} \ldots \int_{E} \Tilde{V}_k(\spec(x_1, \ldots, x_k | K_{1,j}), \ldots, \spec(x_1, \ldots, x_k | K_{k,j})) \gamma(dx_1) \ldots \gamma(dx_k) 
   \\
   = \frac{(2\pi)^{k/2}}{k!\kappa_k} 
   \\
   \times \int_{E} \ldots \int_{E} \lim_{j \to \infty} \Tilde{V}_k(\spec(x_1, \ldots, x_k | K_{1,j}), \ldots, \spec(x_1, \ldots, x_k | K_{k,j})) \gamma(dx_1) \ldots \gamma(dx_k) 
   \\
   = \frac{(2\pi)^{k/2}}{k!\kappa_k} 
   \\
   \times 
   \int_{E} \ldots \int_{E} \Tilde{V}_k(\spec(x_1, \ldots, x_k | K_1), \ldots, \spec(x_1, \ldots, x_k | K_k)) \gamma(dx_1) \ldots \gamma(dx_k).
\end{align*}
Here in the third equality we have used 
Lebesgue's dominated convergence theorem.
The last equality also follows from properties \ref{monoton}, \ref{contin} of mixed volumes and  Corollary \ref{spec}.

The proof of Theorem \ref{mainmix} is complete.




\section{Proof of Theorem \ref{thspW}} \label{example}

Since $K_1$ and $K_2$ are compact $GB$-sets, we can apply Theorem \ref{mainmix} with $k=2$:
\begin{gather*}
    \Tilde{V}(K_{1}, K_{2}) = \frac{2\pi}{2\kappa_2}  \ \mathbf{E} \
    \Tilde{V}_2(\spec_2 K_1, \spec_2 K_2).
\end{gather*}
Since $\kappa_2 = \pi$, we get
\begin{gather}
    \Tilde{V}(K_{1}, K_{2}) = \mathbf{E} \ 
    \Tilde{V}_2(\spec_2 K_1, \spec_2 K_2)\nonumber
    \\= \mathbf{E} \ \Tilde{V}_2 \left(\conv \left(\{X_{1}^{(2)}(t) \colon t\in [0,1]\} \right), \conv \left(\{X_{2}^{(2)}(t) \colon t\in [0,1]\}\right)\right) \label{exmain},
\end{gather}    
where $X_{1}^{(2)}(t), \ X_{2}^{(2)}(t) $ are independent standard two-dimensional Brownian motions, and $\conv(F)$ denotes a closed convex hull of the set $F$.

Therefore, our problem is reduced to finding the mean mixed area $\mathbf{E} \ \Tilde{V}_2$ of the
closed convex hulls of independent two-dimensional Brownian motions on $[0,1]$. 

Further, for calculation, we will use an analogue of the technique given in \cite{majumdar2010random}. The main tools of this  technique are the support function 
and the associated Cauchy's formulae.

Let $C$ be an arbitrary closed smooth convex curve in a plane. Let us represent the curve $C$ as 
$$ C = \{(x(s), y(s))\}, \ s \in C.$$
Now we recall the notion of the support function of the curve $C$.

For $\phi \in [0, 2\pi)$ the value of the \textit{support function} $M(\phi)$ of the curve $C$ is defined by
\begin{align*}
    M(\phi) = \max_{s \in C} \{ x(s) \cos \phi + y(s) \sin \phi\}.
\end{align*}


The Cauchy's formulae (see,  e.g., \cite[pp. 48-49]{majumdar2010random}) allow us to express the length $L$ of the curve $C$ and the area $A$ of the figure bounded by the curve $C$ in terms of the support function:
\begin{align}
    &L = \int_{0}^{2 \pi} M(\phi) d\phi, \label{perimetr}
    \\
    &A = \frac{1}{2}\int_{0}^{2 \pi} \left( (M(\phi))^2 - (M'(\phi))^2 \right)d\phi.\label{area}
\end{align}

In the case when the curve $C$ is random (for example, the boundary of a closed convex hull of a two-dimensional Brownian motion, this curve is almost surely smooth~\cite{el1983enveloppe}), $M(\phi)$ and $M'(\phi)$ are random variables.

Taking the expectation of both sides of relations \eqref{perimetr}, \eqref{area}, we get
\begin{align}
    &\mathbf{E}L = \int_{0}^{2 \pi} \mathbf{E}M(\phi) d\phi, \label{eperimetr}
    \\
    &\mathbf{E}A = \frac{1}{2}\int_{0}^{2 \pi} \left( \mathbf{E}(M(\phi))^2 - \mathbf{E}(M'(\phi))^2\right)d\phi.\label{earea}
\end{align}

Note that the distribution of the two-dimensional Brownian motion is invariant under rotations. Hence, the distribution of the support function $M(\phi)$ does not depend on $\phi$. Therefore, it is sufficient to consider $\phi = 0, M(\phi) \equiv M(0)$. Relations \eqref{eperimetr}, \eqref{earea} in this case can be written in the form
\begin{align}
    &\mathbf{E}L = 2 \pi \ \mathbf{E}M(0), \nonumber 
    \\
    &\mathbf{E}A = \pi \left( \mathbf{E}(M(0))^2 - \mathbf{E}(M'(0))^2\right).\label{isoearea}
\end{align}

The following expression (see, e.g., \cite[pp. 4-5]{MR0433364}) is an analogue 
of the Cauchy's formulae for
 computation 
of the mixed area of two convex compact sets $F_1, F_2$ in a plane with smooth boundary:

\begin{align*}
     \Tilde{V}_2(F_{1}, F_{2}) = \frac{1}{2}\int_{0}^{2 \pi} \left( M_1(\phi)M_2(\phi) - M'_1(\phi)M'_2(\phi) \right) d\phi,
\end{align*}
where $M_1$ and $ M_2$ are the support functions of the curves representing the boundaries of $F_1$ and $F_2$, respectively.

Similarly to \eqref{earea}, for random $F_1, F_2$ we get
\begin{align}
     \mathbf{E} \ \Tilde{V}_2(F_{1}, F_{2}) = \frac{1}{2}\int_{0}^{2 \pi} \left( \mathbf{E}(M_1(\phi)M_2(\phi)) - \mathbf{E}(M'_1(\phi)M'_2(\phi)) \right) d\phi.\label{mixarea}
\end{align}

Now consider $ \conv \left(\{X_{1}^{(2)}(t) \colon t\in [0,1]\} \right)$ and $\conv \left(\{X_{2}^{(2)}(t) \colon t\in [0,1]\} \right) $ as $F_1$ and $F_2$, respectively. By formula \eqref{mixarea} and the independence of $X_{1}^{(2)}(t)$ and $X_{2}^{(2)}(t) $, we have 
\begin{gather}
    \mathbf{E} \ \Tilde{V}_2 \left(\conv \left(\{X_{1}^{(2)}(t) \colon t\in [0,1]\} \right), \conv \left(\{X_{2}^{(2)}(t) \colon t\in [0,1]\}\right)\right) \nonumber
    \\
    = \frac{1}{2}\int_{0}^{2 \pi} \left( \mathbf{E}M_1(\phi)\mathbf{E}M_2(\phi) - \mathbf{E}M'_1(\phi)\mathbf{E}M'_2(\phi) \right) d \phi \nonumber
    \\
    = \frac{1}{2} \ 2 \pi \left( (\mathbf{E}M_1(0))^2 - (\mathbf{E}M'_1(0))^2 \right) 
    = \pi \left( (\mathbf{E}M_1(0))^2 - (\mathbf{E}M'_1(0))^2 \right) \label{exicase}.
\end{gather}   
Here the second equality follows from relation \eqref{isoearea} and the fact that $M_1$ and $M_2$ are identically distributed. 

Thus, it remains to calculate $\mathbf{E}M_1(0)$ and $\mathbf{E}M'_1(0)$, where $M_1$ is the support function of the boundary of the convex hull of the two-dimensional Brownian motion on~$[ 0,1]$.

Recall that
$$
\{X^{(2)}(t) \colon t\in [0,1]\} = \{(W_1(t),W_2(t))\colon t\in [0,1] \} ,
$$
where $W_1(t)$ and $ W_2(t)$ are independent standard one-dimensional Brownian motions.

We fix a direction $\phi$. For $t \in [0,1]$, consider projections on the direction $\phi$ and perpendicular to it:
\begin{align*}
     z_{\phi} (t) = W_1(t) \cos \phi + W_2(t) \sin \phi,
     \\
     h_{\phi} (t) = -W_1(t) \sin \phi + W_2(t) \cos \phi.
\end{align*}
Then $z_{\phi}$ and $ h_{\phi}$ are independent standard one-dimensional Brownian motions on~$[0,1]$. 

Consequently, the support function
\begin{align*}
     M_1(\phi) = \max_{t \in [0,1]}z_{\phi} (t)
\end{align*}
is the maximum of the one-dimensional Brownian motion $z_{\phi}$ on~$[0,1]$.

Let $t^* \in [0,1]$ be the time when 
this maximum is attained. Then
\begin{align}\label{opora}
    M_1(\phi) =z_{\phi} (t^*) =  W_1(t^*) \cos \phi + W_2(t^*) \sin \phi.
\end{align}

Differentiating \eqref{opora} with respect to $\phi$, we have
\begin{align*}
     M'_1(\phi) = -W_1(t^*) \sin \phi+ W_2(t^*) \cos \phi = h_{\phi}(t^*).
\end{align*}
In other words, $M_1(\phi)$ is the maximum of the first Brownian motion $z_{\phi}$, and $M'_1(\phi)$ corresponds to the value of the second, independent Brownian motion $h_{\phi}$ at time $t^*$ when the first one attains its maximum.

In particular, for $\phi = 0$ we obtain $z_0(t) = W_1(t)$, $h_0(t) = W_2(t)$, and
\begin{align*}
   &M_1(0) =\max_{t \in [0,1]}W_1 (t),
   \\
   &M'_1(0) = W_2(t^*).
\end{align*}

  The cumulative distribution function of the maximum of one-dimensional Brownian motion on $[0,1]$ is well known (see, e.g., \cite{MR0270403}), namely
\begin{align*}
  F(m)=\mathbf{P}\left(\max_{t \in [0,1]}W_1 (t)\le m \right)={\rm erf}\left(\frac{m}{\sqrt{2}}\right),     \end{align*}
 where
${\rm erf}(z)= \frac{2}{\sqrt{\pi}}\,\int_0^z e^{-u^2}\, du$. The first moment of this distribution is easily calculated:
\begin{align} \label{fmom}
     \mathbf{E}M_1(0) = \mathbf{E}\max_{t \in [0,1]}W_1 (t) =
\sqrt{\frac{2}{\pi}}.
\end{align}

Let us show that
\begin{align} \label{fmomder}
    \mathbf{E}M'_1(0) = \mathbf{E}W_2(t^*) =0.
\end{align}

Indeed, since $t^*$ and $W_2(t)$ are independent, we see that
\begin{align}\label{der}
     \mathbf{E}M'_1(0) = \mathbf{E}W_2(t^*) =
\int_{0}^1 \int_{-\infty}^{\infty} x p_1(x,t) dx \ p_2(t) dt.
\end{align}
Here $p_1$ denotes the density of the normal distribution $N(0, t)$ under the condition that $t^*=t$, and $p_2$ denotes the density of the random variable $t^*$ (an explicit formula for $p_2 $ can be found in \cite{majumdar2010random}). Since for a fixed $t \in [0,1]$ the inner integral in \eqref{der} equals $0$, we have $\mathbf{E}M'_1(0) = 0$.

Combining \eqref{exmain}, \eqref{exicase}, \eqref{fmom} and \eqref{fmomder}, we get
\begin{align*}
     \Tilde{V}(K_{1}, K_{2}) = \pi \left( (\mathbf{E}M_1(0))^2 - (\mathbf{E}M'_1(0))^ 2\right) = 2.
\end{align*}
\section{Acknowledgments}


The author is grateful to Dmitry Zaporozhets for helpful discussions and valuable comments.

\bibliographystyle{plain}
\bibliography{plan}

\begin{thebibliography}{10}

\bibitem{bogachev1998gaussian}
V.~I. Bogachev.
\newblock {\em Gaussian measures}, volume~62 of {\em Math. Surveys Monogr.}
\newblock American Mathematical Society, Providence, RI, 1998.

\bibitem{burago2013geometric}
Yu.~D. Burago and V.~A. Zalgaller.
\newblock {\em Geometric inequalities}, volume 285 of {\em Grundlehren Math.
  Wiss.}
\newblock Springer-Verlag, Berlin, 1988.
\newblock Transl. from the {Russian} by {A}.{B}. {Sossinsky}.

\bibitem{chevet1976processus}
S.~Chevet.
\newblock Processus {G}aussiens et volumes mixtes.
\newblock {\em Z. f{\"u}r Wahrscheinlichkeitstheorie und Verw. Gebiete},
  36(1):47--65, 1976.

\bibitem{el1983enveloppe}
M.~El~Bachir.
\newblock {\em L'enveloppe convexe du mouvement brownien}.
\newblock PhD thesis, 1983.

\bibitem{MR0270403}
W.~Feller.
\newblock {\em An introduction to probability theory and its applications.
  {V}ol. {II}}.
\newblock John Wiley \& Sons, Inc., New York-London-Sydney, second edition,
  1971.

\bibitem{gao2001intrinsic}
F.~Gao and R.~A. Vitale.
\newblock Intrinsic volumes of the {B}rownian motion body.
\newblock {\em Discrete Comput. Geom.}, 26(1):41--50, 2001.

\bibitem{zaporozhets2014random}
Z.~Kabluchko and D.~N. Zaporozhets.
\newblock Random determinants, mixed volumes of ellipsoids, and zeros of
  {Gaussian} random fields.
\newblock {\em J. Math. Sci.}, 199(2):168--173, 2014.

\bibitem{kz2016intrinsic}
Z.~Kabluchko and D.~N. Zaporozhets.
\newblock Intrinsic volumes of {S}obolev balls with applications to {B}rownian
  convex hulls.
\newblock {\em Trans. Amer. Math. Soc.}, 368(12):8873--8899, 2016.

\bibitem{kolm1985}
A.~N. Kolmogorov.
\newblock {\em Selected works {I}. {Mathematics} and mechanics.}
\newblock Springer Collect. Works Math. Dordrecht: Springer, 2019.
\newblock Reprint of the 1991 hardback edition published by {Kluwer} {Academic}
  {Publishers}.

\bibitem{ML1995}
M.~A. Lifshits.
\newblock {\em Gaussian random functions}, volume 322 of {\em Math. Appl.,
  Dordr.}
\newblock Dordrecht: Kluwer Academic Publishers, 1995.

\bibitem{ML2016}
M.~A. Lifshits.
\newblock {\em Lectures on {Gaussian} processes}.
\newblock SpringerBriefs Math. Springer, Berlin, 2012.

\bibitem{majumdar2010random}
S.~N. Majumdar, A.~Comtet, and J.~Randon-Furling.
\newblock Random {C}onvex {H}ulls and {E}xtreme {V}alue {S}tatistics.
\newblock {\em J. Stat. Phys.}, 138(6):955--1009, 2010.

\bibitem{minkowski1911theorie}
H.~Minkowski.
\newblock Theorie der konvexen {K}{\"o}rper, insbesondere {B}egr{\"u}ndung
  ihres {O}berfl{\"a}chenbegriffs.
\newblock {\em Gesammelte Abhandlungen}, 2:131--229, 1911.

\bibitem{MR0433364}
L.~A. Santal\'{o}.
\newblock {\em Integral {G}eometry and {G}eometric Probability}, volume~1 of
  {\em Encyclopedia Math. Appl.}
\newblock Addison-Wesley Publishing Co., Reading, Mass.-London-Amsterdam, 1976.

\bibitem{schneider2014convex}
R.~Schneider.
\newblock {\em Convex bodies: the {B}runn-{M}inkowski theory}, volume 151 of
  {\em Encyclopedia Math. Appl.}
\newblock Cambridge University Press, Cambridge, expanded edition, 2014.

\bibitem{SW08}
R.~Schneider and W.~Weil.
\newblock {\em Stochastic and integral geometry}.
\newblock Probab. Appl. (N. Y.). Springer-Verlag, Berlin, 2008.

\bibitem{sudakov1979geometric}
V.~N. Sudakov.
\newblock Geometric problems in the theory of infinite-dimensional probability
  distributions.
\newblock {\em Proc. Steklov Inst. Math.}, 141:1--178, 1979.
\newblock Cover to cover translation of {Tr}. {Mat}. {Inst}. {Steklov} 141
  (1976).

\bibitem{Ts1976}
B.~S. Tsirelson.
\newblock A natural modification of a random process and its application to
  stochastic functional series and {Gaussian} measures.
\newblock {\em J. Sov. Math.}, 16:940--956, 1981.

\bibitem{Ts1982}
B.~S. Tsirelson.
\newblock Geometrical approach to the maximum likelihood estimation for
  infinite-dimensional {G}aussian location. {I}.
\newblock {\em Teor. Veroyatnost. i Primenen.}, 27(2):388--395, 1982.

\bibitem{Ts1985}
B.~S. Tsirelson.
\newblock Geometrical approach to the maximum likelihood estimation for
  infinite-dimensional {G}aussian location. {II}.
\newblock {\em Teor. Veroyatnost. i Primenen.}, 30(4):772--779, 1985.

\end{thebibliography}

\end{document}